 \newtheorem{theorem}{Theorem}[section]
\newtheorem{lemma}[theorem]{Lemma}
\newtheorem{corollary}[theorem]{Corollary}
\newtheorem{proposition}[theorem]{Proposition}
\newtheorem*{claim*}{Claim} 
 \theoremstyle{definition}
 \newtheorem{definition}[theorem]{Definition}
 \newtheorem{remark}[theorem]{Remark}
 \newtheorem{example}[theorem]{Example}
\newtheorem*{step*}{Step*}  
\numberwithin{equation}{section}
\newcommand {\N}{\mathbb{N}} 
\newcommand {\Z}{\mathbb{Z}} 
\newcommand {\C}{\mathbb{C}} 
\newcommand{\HH}{\mathcal{H}}
\DeclareMathOperator{\Ker}{Ker}
\DeclareMathOperator{\End}{End}
\begin{document}
\title[Shadowing property and endomorphisms of group shifts]{Shadowing for families of endomorphisms of generalized group shifts}  
\author[X.K.Phung]{Xuan Kien Phung}
\address{} 
\email{phungxuankien1@gmail.com}
\subjclass[2010]{37B51, 37B10, 14L10, 37B15} 
\keywords{shadowing property, pseudo-orbit tracing property, group shift, subshift of finite type, infinite alphabet symbolic system, higher dimensional Markov shift, Artinian group, Artinian module} 
\begin{abstract}
Let $G$ be a countable monoid and let $A$ be an Artinian group (resp. an Artinian module). 
Let $\Sigma \subset A^G$ be a closed subshift which is also a  subgroup (resp. a submodule) of $A^G$. 
Suppose that $\Gamma$ is a finitely generated monoid consisting of pairwise commuting cellular automata $\Sigma \to \Sigma$
that are also homomorphisms of groups (resp. homomorphisms of modules) with monoid binary operation given by composition 
of maps.  
We show that the   valuation action of $\Gamma$ on $\Sigma$ satisfies a natural intrinsic shadowing property. 
Generalizations are also established for families of endomorphisms of admissible group subshifts.   
\end{abstract}
\date{\today}
\maketitle
 
\setcounter{tocdepth}{1}
 \section{Introduction}  
The notion of pseudo-orbit arises in dynamical systems with noise and dates back at least to 
Birkhoff \cite{birkhoff-pseudo-orbit}. Given 
$\delta >0$, a $\delta$-pseudo-orbit of a dynamical system can be thought
of as an orbit computed by computers with an inherent error uniformly bounded by $\delta$ at every step of calculation. 
The remarkable shadowing property (cf.~Anosov \cite{anosov-geodeisc-riemann}, Bowen \cite{bowen-shadow-equilibrium}), also known as the pseudo-orbit tracing property, says that every pseudo-orbit can be approximated by an exact orbit. 
The fundamental shadowing lemma due to Bowen \cite{bowen-shadow-equilibrium}   states 
that a diffeomorphism of a compact manifold has the shadowing property on its hyperbolic set. 
\par 
Important consequences, with deep connections to symbolic dynamics, of group actions on compact metric spaces that satisfy the shadowing property have been discovered notably by Walters  \cite{walters-shadow}, KurKa \cite{kurka-97}, Blanchard and Maass \cite{blanchard-maass-97}, and more recently by Chung and Lee \cite{chung-shadow}, and  Meyerovitch  \cite{meyerovitch-pseudo-orbit}.
Let $G$ be a finitely generated group and let $A$ be a nonempty 
finite set.  Chung and Lee \cite{chung-shadow} show that a closed subshift $\Sigma$ of $A^G$
is of finite type if and only if the shift action of $G$ on  $\Sigma$ has the shadowing property, which extends results of Walters \cite{walters-shadow} and Oprocha \cite{oprocha-08}. 
They also prove that every
expansive continuous action of a countable group with the shadowing property on a compact metric space satisfies the topological stability. Moreover, 
Meyerovitch \cite{meyerovitch-pseudo-orbit} shows that  
an expansive continuous action 
of a countable amenable group  on a compact metric space with positive entropy 
must admit off-diagonal asymptotic pairs if the action has the shadowing property. 
The recent work of 
Barbieri and Ramos-Garcia and Li  \cite{li-2020-markov} also  investigates 
various weak shadowing properties and their consequences for the well-studied class of algebraic actions. 
\par 
Therefore, it is interesting to establish classes of group actions that satisfy the shadowing property. Nontrivial classes are obtained by Chung and Lee \cite{chung-shadow} for  equicontinuous actions (e.g. distal actions) of an infinite, finitely generated group on the Cantor space and by   
Meyerovitch \cite{meyerovitch-pseudo-orbit} for  
expansive principal algebraic actions of a countable group on a compact metrizable abelian group. Osipov and Tikhomirov in  \cite{tikhomirov-14}, and 
Pilyugin and Tikhomirov in  \cite{tikhomirov-03} also obtain some sufficient conditions for group actions on metric spaces to have the shadowing property. 
\par 
Nevertheless, it seems difficult in general to construct concrete and nontrival examples in the above mentioned classes of actions. Moreover, continuous group actions must act by homeomorphisms 
while the notion of shadowing property extends naturally to many interesting non-homeomorphic maps.  
Our main goal in this paper is to show that  
a natural intrinsic shadowing property (Definition~\ref{d:shadow-general}) is always satisfied for the valuation action of 
every finitely generated abelian monoid consisting of  shift-equivariant, uniformly continuous group endomorphisms 
of a certain generalized group shift whose alphabet is not necessarily finite or even compact (e.g. compact Lie groups, algebraic groups, Artinian groups). 
To illustrate, 
we have the following particular instance of our main results (Theorem~\ref{t:main-shadow},  Theorem~\ref{t:main-shadow-subshif}):   

 \begin{corollary} 
 \label{c:intro}
Let $G$ be a countable monoid and let $A$ be an Artinian group (resp. an Artinian module). 
Let $\Sigma \subset A^G$ be a closed subshift which is also a subgroup (resp. a submodule) of $A^G$. 
Let  $\Gamma \subset \End(\Sigma)$ be a finitely generated abelian submonoid consisting of $G$-equivariant group (resp. module) endomorphisms   $\Sigma \to \Sigma$ 
that are uniformly continuous with respect to the uniform prodiscrete topology. 
Then the  valuation action of $\Gamma$ on $\Sigma$ satisfies the shadowing property. 
 \end{corollary}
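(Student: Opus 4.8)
The plan is to reduce the shadowing property to an elementary \emph{local prediction} estimate for pseudo-orbits, and then to upgrade that estimate to a global tracing statement by a compactness argument powered by the descending chain condition in $A$. First I would fix a finite generating set $\tau_1,\dots,\tau_d$ of the abelian monoid $\Gamma$, write $\tau^n:=\tau_1^{n_1}\cdots\tau_d^{n_d}$ for $n=(n_1,\dots,n_d)\in\N^d$, and record the valuation action by the family $(\tau^n)_{n\in\N^d}$. Recall that the uniform prodiscrete structure of $\Sigma\subseteq A^G$ has basic entourages $V_\Omega=\{(x,y):x,y\text{ agree on }\Omega\}$ for finite $\Omega\subseteq G$, which in the module case means $(x-y)|_\Omega=0$; since $\Sigma$ is a subgroup (resp.\ submodule), $V_\Omega$ is defined by a closed subgroup (resp.\ submodule) of $\Sigma$. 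By the Curtis--Hedlund--Lyndon theorem for cellular automata over a countable monoid with Artinian group (resp.\ module) alphabet, each $G$-equivariant uniformly continuous endomorphism $\tau_i$ is a cellular automaton; after enlarging, fix a common finite memory set $M\ni 1_G$, so $\tau_i(x)|_g$ depends only on $x|_{gM}$ and hence $\tau^n(x)|_g$ depends only on $x|_{gM^{|n|_1}}$, where $|n|_1:=n_1+\cdots+n_d$; note $\tau_i(0)=0$. For finite $\Omega'\subseteq G$ and $r\ge0$ I write $\Omega'_r:=\Omega'M^r$, so $\Omega'=\Omega'_0\subseteq\Omega'_1\subseteq\cdots$.

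Next I would establish the local prediction: if $\phi:\N^d\to\Sigma$ satisfies $\tau_i(\phi(n))\in\phi(n+e_i)+V_{\Omega'_r}$ for all $n,i$ (an $\Omega'_r$-pseudo-orbit), then $\tau^n(\phi(0))$ agrees with $\phi(n)$ on $\Omega'$ whenever $|n|_1\le r+1$. Put $c_{k,i}:=\phi(k+e_i)-\tau_i(\phi(k))\in V_{\Omega'_r}$; iterating the defining relation (the choices commute because $\Gamma$ is abelian) gives $\phi(n)=\tau^n(\phi(0))+E_n$ with $E_n$ a finite sum of terms $\tau^m(c_{k,i})$, $|m|_1\le|n|_1-1\le r$. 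Since $\tau^m$ has memory set $M^{|m|_1}$, the pattern $\tau^m(c_{k,i})|_{\Omega'}$ depends only on $c_{k,i}|_{\Omega'M^{|m|_1}}$, and the latter vanishes because $\Omega'M^{|m|_1}\subseteq\Omega'_r$; hence, as $\tau^m(0)=0$, each such term vanishes on $\Omega'$, so $E_n|_{\Omega'}=0$. Thus with tolerance $\Omega:=\Omega'_r$ the configuration $\phi(0)$ alone already traces $\phi$ to precision $\Omega'$ on the simplex $\{\,n:|n|_1\le r+1\,\}$.

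The hard part will be to pass from this finite simplex to all of $\N^d$ with one fixed tolerance, since above the tolerance $\Omega'_r$ grows with the radius $r$; this is where the Artinian hypothesis is used essentially. Fix $\Omega'$, set $\Omega:=\Omega'_r$ for a suitable $r$, and let $\phi$ be an $\Omega$-pseudo-orbit. For finite $P\Subset\N^d$ set $X_P:=\bigcap_{n\in P}(\tau^n)^{-1}\bigl(\phi(n)+V_{\Omega'}\bigr)=\{\,x\in\Sigma:\tau^n(x)\text{ agrees with }\phi(n)\text{ on }\Omega'\ \forall n\in P\,\}$. As $\tau^n$ is a continuous homomorphism, each factor is empty or a coset of the closed subgroup (resp.\ submodule) $(\tau^n)^{-1}(V_{\Omega'})$, so each $X_P$ is empty or such a coset, and $P\mapsto X_P$ is nonincreasing. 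Because $A$ is Artinian, $\Sigma$ is linearly compact for the prodiscrete topology, hence every downward directed family of nonempty cosets of closed subgroups (resp.\ submodules) has nonempty intersection; it therefore suffices to show that each $X_P$ is nonempty with $\Omega$ depending on $\Omega'$ only, for then any $x\in\bigcap_P X_P$ shadows $\phi$ globally to precision $\Omega'$. To prove $X_P\ne\emptyset$ one may assume $P$ is an order ideal $\{\,n\le N\,\}$; then only $x|_{\Omega'_{|N|_1}}$ is relevant, so the system defining $X_P$ lives in the Artinian group (resp.\ module) $\Sigma|_{\Omega'_{|N|_1}}$ and consists of finitely many affine constraints. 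I would solve it by induction on the levels $|n|_1=0,1,2,\dots$, correcting at each level the error created at the preceding one; the corrections can be taken in a descending chain of subgroups (resp.\ submodules) of a fixed Artinian object, which stabilizes by the descending chain condition, and this stabilization is exactly what keeps the required tolerance $\Omega$ bounded in terms of $\Omega'$ alone, independent of $N$. This is the step where one uses that $\Sigma$ is an \emph{admissible} group subshift --- a property it enjoys precisely because $A$ is Artinian --- and in the general framework it is this admissibility (together with the corresponding closing lemma) that is invoked.

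Putting the three steps together yields, for every finite $\Omega'\subseteq G$, a finite $\Omega\subseteq G$ such that every $\Omega$-pseudo-orbit of the valuation action of $\Gamma$ on $\Sigma$ is traced to precision $\Omega'$ by an exact orbit, i.e.\ the intrinsic shadowing property of Definition~\ref{d:shadow-general}; the argument is word for word the same for Artinian groups and Artinian modules, as only the descending chain condition on $A$ intervenes. Alternatively, since $A$ is Artinian the subshift $\Sigma$ is an admissible group subshift and the $\tau_i$ are $G$-equivariant and uniformly continuous, so the corollary also follows at once from Theorem~\ref{t:main-shadow}. I expect Step~3 to be the genuine obstacle: the local prediction is a routine memory-set computation, but converting finitely many approximate solutions into a single exact global orbit requires both the linear compactness of $\Sigma$ and the chain condition, together with careful bookkeeping of memory sets to keep the tolerance uniform.
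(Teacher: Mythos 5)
Your Steps 1 and 2 are sound: the Curtis--Hedlund--Lyndon reduction to a common memory set and the ``local prediction'' computation (propagating the errors $c_{k,i}$ through the homomorphisms $\tau^m$ and killing them on $\Omega'$ by the memory-set containment $\Omega' M^{|m|_1}\subseteq \Omega'_r$) are correct and are in the same spirit as the paper's choice of $\delta$ via the Lipschitz estimate of Lemma~\ref{l:lipschitz-tau}. The genuine gap is Step 3, and you have located it yourself: everything hinges on showing that $X_P\neq\varnothing$ for \emph{all} finite order ideals $P\subset\N^d$ with a tolerance $\Omega$ depending on $\Omega'$ only, and the sentence ``the corrections can be taken in a descending chain of subgroups of a fixed Artinian object, which stabilizes by the descending chain condition, and this stabilization is exactly what keeps the required tolerance bounded'' is an assertion, not an argument. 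For $d=1$ one can imagine carrying out a level-by-level correction using the stabilization of $\tau(\Sigma)\supseteq\tau^2(\Sigma)\supseteq\cdots$ on finite windows, but for $d\geq 2$ the points of a given level $|n|_1=\ell$ form a whole slice of $\N^d$ and the corrections must be chosen \emph{consistently} across that slice and compatibly with all $d$ generators simultaneously; a single descending chain in one Artinian object does not organize this. This local-to-global passage is precisely the hard content of the paper: it is packaged as the statement that the column factorization $\Lambda(\Sigma,E;\tau_1,\dots,\tau_r)\subset(\Sigma_E)^{\N^r}$ is a subshift of finite type (Theorem~\ref{l:closed-kurka-sft}), which rests on the Kitchens--Schmidt-type theorem that $\N^r$ is of admissible Markov type (Theorem~\ref{t:main-N-r-sft}, proved by a nontrivial induction on $r$ via Theorem~\ref{t:ad-group-Z-d}). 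The finite defining window $F_N$ of $\Lambda$ is exactly the uniform bound your Step 3 needs, and producing it requires that machinery, not just the bare descending chain condition on $A$.

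Two smaller remarks. First, your fallback ``the corollary also follows at once from Theorem~\ref{t:main-shadow}'' is not quite accurate as stated: the hypotheses of the corollary only give local defining maps that are homomorphisms $\Sigma_M\to A$, which need not extend to homomorphisms $A^M\to A$, so $\Gamma$ need not lie in $\End_{G\text{-grp}}(\Sigma)$; this is why the paper deduces the corollary from Theorem~\ref{t:main-shadow-subshif} rather than from Theorem~\ref{t:main-shadow} directly. Second, when you invoke ``linear compactness'' to intersect the $X_P$, you should restrict to finite windows so as to apply the inverse-limit lemma for translates of admissible subgroups (Lemma~\ref{l:inverse-limit-ad-grp}); that part is routine, but it is worth noting that even it uses the admissible (Artinian) structure and not merely closedness.
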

 
 For example, let $A$ be a finite dimensional vector space over a field $K$. 
 Let $r \in \N$ then there is a natural shift action of the countable monoid $\N^r$ on $A^{\N^r}$. 
 Let $\Sigma \subset A^{\N^r}$ be an $\N^r$-invariant $K$-linear subspace which is closed with respect to the prodiscrete topology. 
 Suppose that $\tau  \colon \Sigma \to \Sigma$
 is a cellular automaton that is also a  $K$-linear map. 
 \par 
 Then the following shadowing property is     satisfied. Let $(E_n)_{n \geq  0}$ be an arbitrary increasing sequence of finite subsets of $\N^r$ such that $\N^r = \bigcup_{n \geq 0} E_n$ and $E_0 = \varnothing$. Such a sequence defines a \emph{Hamming metric} $d$ on $A^{\N^r}$: 
 \[
 d(x, y) \coloneqq 2^{-n(x,y)}, \quad   n(x,y) 
 \coloneqq \sup \{ n \geq 0 \colon x(g) = y(g) \mbox{ for every } g \in E_n \}.  
 \] 
 \par 
 Then for every $\varepsilon >0$, there exists $\delta >0$ such that 
 for every $\delta$-pseudo-orbit 
 $(x_n)_{n \geq 0}$ in the dynamical system $(\tau, \Sigma, d\vert_\Sigma)$, that is, $d(\tau(x_{n}), x_{n+1}) <  \delta$ for all $n \geq 0$,   we can always find  $x \in \Sigma$ such that 
$d(\tau^n(x), x_n) < \varepsilon$ for all $n \geq 0$. 
 
 \par 
A simple counter-example shows that we can not eliminate the finite generation hypothesis on the monoid $\Gamma$ in Corollary~\ref{c:intro} (cf.~Example~\ref{ex:counter}).   
\par 
 The paper is organized as follows. In Section \ref{s:pre}, we fix the notations and recall the definition of subshifts of finite type and cellular automata over monoids. 
 In Section~\ref{s:shadow}, we recall and extend the notion of pseudo-orbit and the shadowing property in~\cite{chung-shadow} to our context of the action of a finitely generated monoid $\Gamma$ on a subshift $\Sigma$. Notably, the shadowing property is independent of the choice of a finitely generating set $S$ of $\Gamma$ and of a \emph{standard Hamming metric} on $\Sigma$ (cf.~\eqref{d:hamming}). We then study in Section~\ref{s:lipschitz} the Lipschitz continuity of cellular automata with respect to a standard Hamming metric induced by a certain \emph{admissible exhaustion} of a monoid universe $G$ (Definition~\ref{d:admissible-sequence}).  In Section~\ref{s:admissible-subshift}, we extend the notion of \emph{admissible group subshift} introduced in~\cite{phung-2020} to the context of monoid universes. In particular, we show that admissible group subshifts over $\N^r$ are subshifts of finite type (cf.~Theorem~\ref{t:main-N-r-sft}). 
 This finiteness result allows us to apply a generalization of Kurka's construction of  column factorizations~\cite{kurka-97} as well as the notion of canonical factor of Blanchard-Maass~\cite{blanchard-maass-97} to establish the main results (Theorem~\ref{t:main-shadow} and Theorem~\ref{t:main-shadow-subshif}) from which we deduce Corollary~\ref{c:intro}.

 \section{Preliminaries}
 \label{s:pre}
 
 \subsection{Notation}
The set of non negative integers is denoted 
by $\N$.  
 Let $A, B$ be sets and let $C \subset B$. Then $A^B$ denotes the set of   maps from $B$ into $A$. 
If $x \in A^B$, the restriction $x\vert_C \in A^C$ is given by $x\vert_C(c) = x(c)$ for all $c \in C$.
If $X \subset A^B$, then $X_C \coloneqq \{ x\vert_C \colon x \in X \} $ is called the restriction of $X$ to $C$. For subsets $E, F$ of a monoid $G$, we denote $EF \coloneqq \{ xy\colon x \in E, y \in F \} \subset G$. 
 \subsection{Subshifts and subshifts of finite type} 
Let $G$ be a monoid, called the \emph{universe}, and let $A$ be a set, called the \emph{alphabet}. 
The \emph{right shift action} of $G$ on $A^G$ is defined  by 
$(g, x) \mapsto g\star x$ where $(g \star x)(h) \coloneqq x(hg)$ for every $g \in G$ and $x \in A^G$.   
A subset $\Sigma \subset A^G$ is \emph{$G$-invariant} if $g\star x \in \Sigma$ for all $g \in G$ and $x \in \Sigma$. 
In this case,  $\Sigma$ is called a \emph{subshift} of $A^G$. 
We do not require $\Sigma$ to be closed in $A^G$. 
\par 
Given subsets $D \subset G$ and $P \subset A^D$, 
consider the following subshift of $A^G$: 
\begin{align}
\label{e:sft} 
\Sigma(A^G; D,P)  \coloneqq  \{ x \in A^G \colon (g\star x)\vert_{D} \in  P \text{ for all } g \in G\}. 
\end{align} 
\par 
Such a set $D$ is called a
\emph{defining window} of $\Sigma(A^G; D,P)$. 
The subshift $ \Sigma(A^G; D,P)$ is clearly closed in $A^G$ with respect to the prodiscrete topology. 
If $D$ is finite,  $ \Sigma(A^G; D,P)$ is called the 
\emph{subshift of finite type} of $A^G$ associated with $D$ and $P$. 

 \par 
 
Now suppose that $H$ is a submonoid of the monoid $G$. 
Let $E \subset G$ be a subset  
such that $Hk_1\neq Hk_2$ for all distinct $k_1, k_2 \in E$.  
Denote $B \coloneqq A^E$. 
Then for every subset $F \subset H$, 
we have a  canonical bijection $B^F = A^{F   E}$ where 
every  $x \in B^F$ is mapped to an element $y \in A^{F  E}$ given  
by $y(h k) \coloneqq (x(h))(k)$ for every $h \in F$ and $k \in E$. 
\par 
We have the following elementary observation: 
 \begin{lemma}
\label{l:restriction-sft} 
With the above notations and hypotheses, let $D \subset H$ and let $P \subset A^{DE}=B^D$ be subsets. 
Then we have 
\[
\Sigma(A^G; HE, \Sigma(B^H; D, P)) = \Sigma(A^G; DE, P). 
\]
\end{lemma}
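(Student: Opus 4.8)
The plan is to peel off, one at a time, the two nested instances of the construction~\eqref{e:sft} on the left‑hand side and to check that the two sides cut out the same subset of $A^G$. Fix $x \in A^G$. By~\eqref{e:sft}, $x$ belongs to the left‑hand side exactly when, for every $g \in G$, the restriction $(g \star x)\vert_{HE}$ lies in $\Sigma(B^H; D, P)$ once it is read, through the canonical bijection $A^{HE} = B^H$, as the element $z_g \in B^H$ with $z_g(h)(k) = (g \star x)(hk) = x(hkg)$ for $h \in H$ and $k \in E$. This is the one place where the hypothesis $Hk_1 \neq Hk_2$ for distinct $k_1, k_2 \in E$ enters: it is precisely what makes $A^{FE} = B^F$ a genuine bijection for every subset $F \subseteq H$, in particular for $F = H$ and $F = D$, so that all the patterns occurring below are unambiguously defined.

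Applying~\eqref{e:sft} a second time, now inside $B^H$, and then the bijection $B^D = A^{DE}$, one gets that $z_g \in \Sigma(B^H; D, P)$ if and only if for every $h' \in H$ the pattern $DE \ni dk \mapsto z_g(dh')(k) = x(dh'kg)$ lies in $P$ (note $dh' \in H$ since $H$ is a submonoid and $D \subseteq H$). Therefore $x$ lies in the left‑hand side if and only if, for all $g \in G$ and all $h' \in H$, the pattern $dk \mapsto x(dh'kg)$ lies in $P$; whereas, directly from~\eqref{e:sft}, $x$ lies in the right‑hand side if and only if, for all $g' \in G$, the pattern $dk \mapsto (g' \star x)(dk) = x(dkg')$ lies in $P$. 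One containment is then immediate: $H$ contains $1_G$, and taking $h' = 1_G$ turns the left‑hand condition at $g$ into the right‑hand condition at $g' = g$, so the left‑hand side is contained in the right‑hand side. For the reverse containment one reparametrizes the defining position, absorbing the inner shift parameter $h'$ into the $G$‑shift parameter; since $(g, h')$ running over $G \times H$ makes the resulting position run over all of $G$ (again because $1_G \in H$), every instance of the left‑hand condition is an instance of the right‑hand one.

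So the proof is essentially bookkeeping: one keeps straight the two nested identifications $A^{HE} = B^H$ and $B^D = A^{DE}$ together with the two nested shift actions (of $G$ on $A^G$ and of $H$ on $B^H$), and then checks that the position reparametrization in the final step matches the two families of patterns term by term. I expect this last term‑by‑term matching of positions to be the only step that requires genuine care; in particular no finiteness of $A$, $D$, or $E$ is used anywhere, so the statement holds for arbitrary defining windows.
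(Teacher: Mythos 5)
Your reduction of both sides to explicit pattern conditions is correct, and the inclusion of the left-hand side in the right-hand side (the $h'=1_G$ specialization) is fine; up to that point you are doing the same thing as the paper, only with the quantifiers written out more carefully. The gap is in the reverse containment. As you computed, membership in the left-hand side demands that the pattern $dk\mapsto x(dh'kg)$ lie in $P$ for every $g\in G$ and every $h'\in H$, whereas the right-hand side only supplies the patterns $dk\mapsto x(dkg')$ for $g'\in G$. Your ``absorption'' of $h'$ into the shift parameter requires a single $g'\in G$ with $dh'kg=dkg'$ simultaneously for all $d\in D$ and $k\in E$, i.e.\ it requires moving $h'$ past $k$; the phrase ``the resulting position runs over all of $G$'' treats the pattern as if it were attached to one position, but it is a function of the pair $(d,k)$ and $h'$ sits between $d$ and $k$. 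In a general monoid this step is not available, and in fact the asserted containment can fail: take $G$ the free monoid on $\{a,b\}$, $H=\langle a\rangle$, $E=\{1_G,b\}$ (so $H\cdot 1_G\neq Hb$), $D=\{1_G\}$, $A=\{0,1\}$, and let $P\subset A^{DE}$ consist of all patterns except the one with value $1$ at $1_G$ and $0$ at $b$. The configuration $x$ with $x(w)=1$ exactly for $w\in\{b^ia\colon i\ge 0\}$ lies in $\Sigma(A^G;DE,P)$ but not in $\Sigma(A^G;HE,\Sigma(B^H;D,P))$, because $x(a)=1$ while $x(ab)=0$.

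You should know that the paper's own proof of this inclusion is no more complete: it concludes $(g\star x)\vert_{HE}\in\Sigma(B^H;D,P)$ from the single fact $((g\star x)\vert_{HE})\vert_{DE}\in P$, which again only checks the $h'=1_G$ instance. The lemma is only ever invoked in situations where every element of $E$ commutes with every element of $H$ (namely $G=H\times\N$ or $G=H\times F$ with $E$ contained in the factor complementary to $H$), and under that hypothesis your reparametrization does work: $dh'kg=dk(h'g)$, so one takes $g'=h'g$ and the left-hand condition at $(g,h')$ becomes the right-hand condition at $g'=h'g$. With that commutation assumption made explicit, your argument is a complete proof, and a more scrupulous one than the paper's.
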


\begin{proof} 
(See also \cite[Lemma~2.2]{phung-2020})
Let us denote $\Sigma \coloneqq \Sigma(A^G; H E, \Sigma(B^H; D, P))$.  
\par 
Let $x \in \Sigma$  and let $g \in G$. 
Then  we have $(g \star x) \vert_{H E} \in \Sigma(B^H; D, P)$. 
Since $D   E \subset H   E$, 
we deduce from the canonical bijection $A^{DE} = B^D$ and the definition of $\Sigma(B^H; D, P)$ 
that $  (g \star x) \vert_{D   E} \in  P$. 
Thus, we find that $\Sigma \subset \Sigma(A^G; DE, P)$. 
\par 
Conversely, let 
$x \in \Sigma(A^G; DE, P)$ and let $g \in G$. 
Then it follows that 
$(g \star x)\vert_{DE} \in P$. 
Since $D  E \subset H E$, 
we have  
$((g \star x)\vert_{H E})\vert_{D E} \in P$ so that 
$(g \star x)\vert_{H E} \in \Sigma(B^H; D, P)$. 
Thus,  
$x \in \Sigma$ and $\Sigma(A^G; DE, P) \subset \Sigma$. 
\end{proof}

\subsection{Cellular automata over monoids}

Following the   work of John von Neumann \cite{neumann-book}, 
 cellular automata over monoids are defined as follows (cf.~\cite{csc-monoid-surj}). 
Given (finite or infinite) sets $A, B$ and a monoid $G$, a map $\tau \colon A^G  \to B^G$ is a \emph{cellular automaton}  
 if there exist a finite subset $M \subset G$ called \emph{memory set} 
and a map $\mu \colon  A^M \to B$ called \emph{local defining map} such that 
\begin{equation} 
\label{e;local-property}
\tau(x)(g) = \mu( (g\star x)\vert_M)  \quad  \text{for all } x \in A^G \text{ and } g \in G.
\end{equation}
\par 
Equivalently, a map $\tau \colon A^G \to B^G$ is a cellular automaton if and only if 
it is $G$-equivariant and uniformly continuous with respect to the prodiscrete uniform structure 
(cf.~\cite[Theorem~4.6]{csc-monoid-surj}).  
Remark   the slight difference  
with the definition of cellular automata over groups (see, e.g.~\cite{book}).

\section{Shadowing property and actions on shift spaces}
\label{s:shadow}
  
\begin{definition}
\label{d:pseudo-orbit} 
Let $X$ be a set. 
Let $S$ be a finitely generating set of a monoid $\Gamma$. 
Let $T$ be an action of $\Gamma$ on $X$  
and let $d$ be a metric on $X$. 
\begin{enumerate} [\rm (i)]
\item 
for $\delta >0$, a sequence  $(x_\tau)_{\tau\in \Gamma}$ in $X$ is called an \textit{$(S,d,\delta)$-pseudo-orbit} of $T$  if $d(T(\sigma,x_\tau), x_{\sigma \tau}) < \delta$ for all $\sigma \in S$ and $\tau\in \Gamma$.
 \item 
the action $T$ has the \textit{$(S,d)$-shadowing property}   
if for every $\varepsilon>0$, there exists $\delta>0$ such that every $(S,d,\delta)$-pseudo-orbit  $\{x_\tau\}_{\tau\in \Gamma}$ 
of $T$ is \emph{$\varepsilon$-shadowed} by some point $x$ of $X$, i.e., 
$d(T(\tau,x), x_\tau) < \varepsilon$ for all $\tau\in \Gamma$.
\end{enumerate} 
\end{definition}
\par 
Note that if $X$ is not a compact space, 
the shadowing property of $T$ might depend on the choice of the metric on $X$.  
However, the shadowing property becomes an intrinsic property  
when $X$ is a shift space over a countable monoid universe and with an arbitrary alphabet as we will see below.  
\par 
Let $G$ be a countable monoid. 
Let $(E_n)_{n \geq 0}$ be an \emph{exhaustion} of $G$, i.e., 
$E_n \subset E_{n+1}$ for all $n \geq 0$ and $G= \bigcup_{n \geq 1} E_n$, 
such that $E_0 = \varnothing$ and $E_n$ is a finite subset of $G$ for every $n \geq 0$.  
\par 
Let $A$ be a set. 
We define the \emph{standard Hamming metric} $d$ on $A^G$ associated with the exhaustion $(E_n)_{n \geq 0}$ 
by setting 
\begin{equation}
\label{d:hamming}
d(x,y) \coloneqq 2^{-n}, \mbox{ where }  n \coloneqq \sup \{ k \in \N \colon x\vert_{E_k} =y\vert_{E_k} \}. 
\end{equation}
\par 
It is immediate that a sequence $(x_n)_{n \geq 0}$ of elements of $A^G$ converges 
to some $y \in A^G$ if and only if $ \lim_{n \to \infty} d(x_n, y) = 0$. 

\begin{lemma} 
\label{l:intrinsic-def-shadow}
Let $S$  be a finitely generating set of a monoid $\Gamma$. 
Let $A$ be a set and let $G$ be countable monoid. 
Let $X \subset A^G$ be a subshift.  
Let $d$ and $d'$ be two standard Hamming metrics on $A^G$. 
Then an action $T$ of $\Gamma$ on $X$ has the $(S,d\vert_X)$-shadowing property 
if and only if it has the $(S,d'\vert_X)$-shadowing property.    
\end{lemma}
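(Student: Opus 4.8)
The plan is to prove only one implication, say that the $(S,d\vert_X)$-shadowing property implies the $(S,d'\vert_X)$-shadowing property, since the statement is symmetric in $d$ and $d'$. Write $d$ for the standard Hamming metric attached to an exhaustion $(E_n)_{n\geq 0}$ and $d'$ for the one attached to $(E'_n)_{n\geq 0}$. The only structural input I would use is that each exhaustion is \emph{cofinal} in the other: since every $E'_m$ is finite and $G=\bigcup_{n\geq 0}E_n$ with the $E_n$ increasing, there is a least $n$, which I call $\phi(m)$, with $E'_m\subseteq E_{\phi(m)}$; symmetrically define $\psi(m)$ as the least $n$ with $E_m\subseteq E'_n$. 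Both $\phi,\psi\colon\N\to\N$ are nondecreasing.

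Next I would record the two elementary comparisons that follow immediately from \eqref{d:hamming} together with the fact that $x\vert_{E_n}=y\vert_{E_n}$ if and only if $d(x,y)\leq 2^{-n}$ (and likewise for $d'$): for all $x,y\in A^G$ and all $m\geq 0$ one has $d(x,y)\leq 2^{-\phi(m)}\Rightarrow d'(x,y)\leq 2^{-m}$, and $d'(x,y)\leq 2^{-\psi(m)}\Rightarrow d(x,y)\leq 2^{-m}$. Since every value of either metric is $0$ or a power of $2$, a strict inequality $d(x,y)<2^{1-j}$ is equivalent to $d(x,y)\leq 2^{-j}$, which is the only arithmetic point to keep track of.

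With these in hand the $\varepsilon$–$\delta$ argument is routine. Given $\varepsilon>0$ measured by $d'$, pick $m$ with $2^{-m}<\varepsilon$; apply the $(S,d\vert_X)$-shadowing property with tolerance $\varepsilon_1\coloneqq 2^{1-\phi(m)}$ to obtain $\delta_1>0$; pick $k$ with $2^{-k}<\delta_1$; and set $\delta\coloneqq 2^{-\psi(k)}$. If $(x_\tau)_{\tau\in\Gamma}$ is an $(S,d'\vert_X,\delta)$-pseudo-orbit of $T$, then for all $\sigma\in S$ and $\tau\in\Gamma$ we have $d'(T(\sigma,x_\tau),x_{\sigma\tau})<\delta=2^{-\psi(k)}$, hence $d(T(\sigma,x_\tau),x_{\sigma\tau})\leq 2^{-k}<\delta_1$ by the second comparison, so $(x_\tau)_{\tau\in\Gamma}$ is an $(S,d\vert_X,\delta_1)$-pseudo-orbit. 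By the choice of $\delta_1$ there is $x\in X$ with $d(T(\tau,x),x_\tau)<\varepsilon_1$ for all $\tau\in\Gamma$, hence $d(T(\tau,x),x_\tau)\leq 2^{-\phi(m)}$, and the first comparison gives $d'(T(\tau,x),x_\tau)\leq 2^{-m}<\varepsilon$. Thus $x$ $\varepsilon$-shadows the pseudo-orbit with respect to $d'$, which is what we want.

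There is no genuine obstacle: the whole content is that agreement of two configurations on a large enough window of one exhaustion forces agreement on a prescribed window of the other, i.e.\ that the two metrics are \emph{uniformly equivalent}, together with the general principle that the shadowing property is a uniform notion. The only thing that needs a little care is to apply $\psi$ and $\phi$ in the right directions — $\psi$ to convert the given $d'$-pseudo-orbit hypothesis into a $d$-pseudo-orbit, and $\phi$ to convert the resulting $d$-shadowing conclusion back into a $d'$-shadowing conclusion — and to absorb the strict-versus-non-strict inequalities via the power-of-two shift noted above.
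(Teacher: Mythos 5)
Your proof is correct and follows essentially the same route as the paper's: both arguments rest on the cofinality of the two exhaustions (each finite window of one is contained in a window of the other) and then chase the $\varepsilon$--$\delta$ quantifiers, converting the $d'$-pseudo-orbit hypothesis into a $d$-pseudo-orbit and the $d$-shadowing conclusion back into $d'$-shadowing. Your version is in fact slightly more careful than the paper's about the strict-versus-non-strict inequalities (the $2^{1-\phi(m)}$ shift), but this is only an off-by-one refinement, not a different argument.
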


\begin{proof}
By hypotheses, 
$d$ and $d'$ are standard Hamming metrics on $A^G$ 
associated with some exhaustions  $(E_n)_{n \geq 0}$ and $(E'_n)_{n \geq 0}$ 
of finite subsets of $G$ respectively. 
Note that $(E_n)_{n \geq 0}$ and $(E'_n)_{n \geq 0}$  are cofinal, namely, 
each member of $(E_n)_{n \geq 0}$ is contained in a member of $(E'_n)_{n \geq 0}$ and vice versa.  
\par 
Let $T$ be an action of $\Gamma$ on $X$ and 
suppose that $T$ has the $(S,d\vert_X)$-shadowing property.  
\par 
Let $m'_0 \in \N$ and choose $m_0 \in \N$ such that $E'_{m'_0} \subset E_{m_0}$.  
Then there exists $n_0 \in \N$  
such that every $(S,d, 2^{-n_0})$-pseudo-orbit $\{x_\tau\}_{\tau\in \Gamma}$ of $T$ 
is $2^{-m_0}$-shadowed by a point $x \in X$ 
with respect to the metric $d\vert_X$. 
\par  
Choose $n'_0 \in \N$ such that $E_{n_0} \subset E'_{n'_0}$.  
Then every $(S,d'\vert_X,2^{-n'_0})$-pseudo-orbit of $T$  
is also a $(S, d\vert_X, 2^{-n_0})$-pseudo-orbit of $T$.  
By the choice of $n_0$ and $m_0$, such a pseudo-orbit is $2^{-m_0}$-shadowed with respect to the metric $d\vert_X$ 
by some point $x \in X$ and thus is $2^{-m'_0}$-shadowed  with respect to the metric $d'\vert_X$ by the same point $x$. 
\par 
Therefore, $T$ also has the $(S,d'\vert_X)$shadowing property.  
By exchanging the roles of $d$ and $d'$, the conclusion follows. 
\end{proof}

The above lemma leads us to the following intrinsic notion of shadowing property for actions of a finitely generated monoid on a subshift. 

\begin{definition}
\label{d:shadow-general} 
Let $A$ be a set and let $G$ be countable monoid. 
Let $\Sigma \subset A^G$ be a subshift and let $\Gamma$ be a finitely generated monoid. 
An action $T$ of $\Gamma$ on $\Sigma$ is said to have the \textit{shadowing property} 
if $T$ has the $(S,d\vert_\Sigma)$-shadowing property for 
 every finitely generating set $S$ of $\Gamma$ 
and every standard Hamming metric $d$ on $A^G$.     
\end{definition}

\section{Lipschitz continuity of cellular automata} 
\label{s:lipschitz}

We introduce the following notion of admissible exhaustive sequences of a countable monoid 
that will be used in the proof of Theorem~\ref{t:main-shadow}. 
\begin{definition} 
\label{d:admissible-sequence}
Let $G$ be a countable monoid. 
 A sequence $(E_n)_{n \geq 0}$ 
of finite subsets of $G$ is called an \emph{admissible exhaustion} of $G$ 
if it satisfies the following conditions: 
\begin{enumerate} [\rm (1)]
\item 
$E_0= \varnothing$, $1_G \in E_1$; 
\item 
$E_n^2  \subset E_{n+1}$ for every $n \geq 0$;  
\item
$G= \bigcup_{n \geq 0} E_n$; 
\end{enumerate} 
\end{definition}
\par 
Suppose that $(E_n)_{n \geq 0}$ is such an admissible exhaustion of $G$. 
Then  it follows from (1), (2) and an immediate induction that for every $n \geq 1$, we have 
$1_G \in E_n$ and thus by (2), we deduce that 
\begin{enumerate} [\rm (4)] 
\item  
$E_n \subset E_{n+1}$ for every integer $n \geq 0$; 
\end{enumerate} 
\par 
Moreover, we have the following property: 
\begin{enumerate} [\rm (5)] 
\item  
for every finite subset $M \subset G$, there exists an integer $n_0 \geq 1$ such that 
$E_n M \subset E_{n+1}$ for every $n \geq n_0$;  
\end{enumerate} 
\par 
Indeed, let $M$ be a finite subset of $G$ then 
it follows from (3) and (4) that there exists an integer $n_0 \geq 1$ such that 
$M \subset E_{n_0}$. Therefore, we infer from (2) and (4) that for every $n \geq n_0$, we have 
$E_nM \subset E_n E_{n_0} \subset E_n^2 \subset E_{n+1}$. 
\par 
We remark also that by an easy inductive construction, 
every countable monoid admits an admissible exhaustion. 
\par 
Our main motivation to introduce admissible exhaustions is that they satisfy the following useful property. 
\begin{lemma}  
\label{l:lipschitz-tau}
Let $G$ be a countable monoid and let $A$ be a set.  
Let $d$ be a standard Hamming metric of $A^G$ associated with an admissible exhaustion of $G$. 
Let $\tau \colon A^G \to A^G$ be a cellular automaton. 
Then  there exists a constant $C >0$ such that 
$\tau$ is a $C$-Lipschitz map on the metric space $(A^G, d)$. 
\end{lemma}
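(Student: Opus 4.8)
The goal is to find a single Lipschitz constant $C$ for $\tau$ with respect to the metric $d$ coming from an admissible exhaustion $(E_n)_{n\geq 0}$. The key is that $\tau$ has a finite memory set $M$, so the value $\tau(x)(g)$ depends only on $x$ restricted to $gM$ (using the right-shift convention $(g\star x)|_M$, i.e.\ on the coordinates $\{gm : m\in M\}$). Thus if $x$ and $y$ agree on a large enough finite set, then $\tau(x)$ and $\tau(y)$ agree on a correspondingly large set, and this "loss" is controlled uniformly by property (5) of admissible exhaustions.

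\textbf{Step 1: Control the window shift via property (5).} Let $M$ be a memory set for $\tau$ with local map $\mu$. Applying property (5) to the finite set $M$, there is an integer $n_0\geq 1$ such that $E_n M \subset E_{n+1}$ for every $n\geq n_0$. I would set $C \coloneqq 2^{n_0}$ and claim this works.

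\textbf{Step 2: The estimate for close points.} Take $x,y\in A^G$ with $d(x,y) = 2^{-k}$, so $x|_{E_k} = y|_{E_k}$ (and $k$ is the largest such index). If $k \leq n_0$, then $d(\tau(x),\tau(y)) \leq 1 \leq 2^{n_0}\cdot 2^{-k} = C\, d(x,y)$ trivially, since the metric is bounded by $1$. If $k > n_0$, write $k = n+1$ with $n \geq n_0$. For any $g\in E_n$ we have $gM \subset E_n M \subset E_{n+1} = E_k$, and since $x$ and $y$ agree on $E_k$ they agree on $gM$; hence, using the local rule \eqref{e;local-property}, $\tau(x)(g) = \mu((g\star x)|_M) = \mu((g\star y)|_M) = \tau(y)(g)$. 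Therefore $\tau(x)|_{E_n} = \tau(y)|_{E_n}$, so $d(\tau(x),\tau(y)) \leq 2^{-n} = 2\cdot 2^{-(n+1)} = 2\, d(x,y) \leq C\, d(x,y)$ (as $C = 2^{n_0} \geq 2$). In all cases $d(\tau(x),\tau(y)) \leq C\, d(x,y)$, so $\tau$ is $C$-Lipschitz.

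\textbf{Remark on obstacles.} There is no serious obstacle; the only things to be careful about are the index bookkeeping (matching "agreement on $E_k$" with the right value of $n$ via $E_n M \subset E_{n+1}$) and the degenerate small-$k$ case where one simply invokes that $d \leq 1$. The essential input is entirely packaged in property (5) of admissible exhaustions, which is precisely why that notion was introduced; the finiteness of the memory set $M$ is what makes property (5) applicable. One should also note that the statement is about $\tau$ defined on all of $A^G$, so no subshift considerations enter here — the restriction to $\Sigma$ would only make the Lipschitz constant no larger.
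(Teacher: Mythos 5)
Your proof is correct and follows essentially the same route as the paper's: invoke property (5) of the admissible exhaustion to get $n_0$ with $E_nM\subset E_{n+1}$ for $n\geq n_0$, handle close pairs via the memory-set argument (gaining only one index, hence a factor of $2$) and distant pairs via the trivial bound $d\leq 1$, yielding the Lipschitz constant $2^{n_0}$. (The only nitpick: with the paper's convention $(g\star x)(h)=x(hg)$, the value $\tau(x)(g)$ depends on $x\vert_{Mg}$ rather than $x\vert_{gM}$, but since $M\subset E_{n_0}$ both $ME_n$ and $E_nM$ lie in $E_n^2\subset E_{n+1}$, so the argument is unaffected — and the paper's own write-up has the same harmless slip.)
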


\begin{proof}
By hypothesis, $d$ is a standard Hamming metric of $A^G$ associated with an admissible exhaustion 
$(E_n)_{n \geq 0}$ of $G$. 
Let $M \subset G$ be a memory set of $\tau$. Then 
$M$ is finite and  by the property (5) above for $(E_n)_{n \geq 0}$, we deduce that 
there exists an integer $n_0 \geq 1$ such that $E_n M \subset E_{n+1}$ for every $n \geq n_0$. 
\par 
Let $x, y \in A^G$. We will distinguish two cases depending 
on whether $d(x,y) \leq 2^{- n_0 -1 }$ or not. 
\par 
Suppose first that $d(x,y) = 2^{-n} \leq 2^{- n_0 -1}$ for some integer $n \geq n_0+1$. 
Then $E_{n-1} M \subset E_n$ by the choice of $n_0$. 
Moreover, by definition of $d$, we have  $x\vert_{E_n}= y\vert_{E_n}$. 
Since $M$ is a memory set of $\tau$, we deduce that 
\[
(\tau(x))\vert_{E_{n-1}} = (\tau(y))\vert_{E_{n-1}} 
\] 
and it follows immediately that $d(\tau(x), \tau(y)) \leq 2^{-(n-1)} = 2 d(x,y)$. 
\par 
Now suppose that $d(x,y) > 2^{- n_0 -1 }$. Then by definition of $d$, 
we deduce that  $d(x,y) \geq 2^{-n_0}$ and thus 
$d(\tau(x), \tau(y) \leq 1 \leq 2^{n_0} d(x,y)$. 
\par   
To summarize, we have shown that for every $x, y \in A^G$, we have 
\[
d(\tau(x), \tau(y)) \leq 2^{n_0} d(x,y)
\]
which implies that $\tau$ is $2^{n_0}$-Lipschitz. 
The proof is thus completed. 
\end{proof}

\section{Admissible group subshifts} 
\label{s:admissible-subshift} 
In this section, we recall and formulate direct extensions to the case of monoid universes the general notion of admissible group subshifts introduced in \cite{phung-2020} 
as well as their basic properties. 
 
 \subsection{Admissible Artinian group structures}  
\begin{definition} [cf. \cite{phung-2020}]
\label{d:general-artinian-structure}
Let $A$ be a group. Suppose that for every integer $n\geq 1$, 
$\HH_n$ is a collection of subgroups of $A^n$ with the following properties: 
\begin{enumerate} 
\item 
$ \{1_A\}, A \in \HH_1$; 
\item 
for every $m \geq n \geq 1$ and for every  projection $\pi \colon A^m \to A^n$ induced by an injection 
$\{1, \cdots, n\} \to \{ 1, \cdots, m\}$, we have $\pi(H_m) \in \HH_n$ and $\pi^{-1}(H_n) \in \HH_m$ 
for every $H_m \in \HH_m$ and $H_n \in \HH_n$;   
\item 
for every $n \geq 1$ and $H, K\in \HH_n$, we have $H \cap K \in \HH_n$; 
\item 
for every $n \geq 1$, every descending sequence $(H_k)_{k \geq 0}$ of subgroups of $A^n$, where  
 $H_k \in \HH_n$ for every $k \geq 0$, eventually stabilizes. 
\end{enumerate} 
\par 
Then for $\mathcal{H}= (\mathcal{H}_n)_{n \geq 1}$, we say that  $(A, \HH)$, or simply $A$, 
is an \emph{admissible Artinian group structure}. 
For every $n \geq 1$, elements of $\HH_n$ are called \emph{admissible subgroups} of $A^n$ 
(with respect to the structure $\HH$). 
\par 
If $E$ is a finite set,  then $A^E$ admits an admissible Artinian structure induced by that of $A^{\{1, \cdots, |E|\}}$ via an arbitrary bijection 
$\{1, \cdots, |E|\} \to E$.    
\end{definition}

\begin{example} 
\label{example:canonical-module-admissible} 
(cf.~\cite[Examples~9.5,~9.7]{phung-2020}) 
An algebraic group $V$ over an algebraically closed field, 
resp. a compact Lie group $W$, resp. an Artinian (left or right) 
module $M$ over a ring $R$, 
admits a canonical admissible Artinian structure given by all algebraic subgroups of $V^n$, 
resp. by all closed subgroups of $W^n$, resp. 
by all $R$-submodules of $M^n$, for every $n \geq 1$  
\end{example}

\begin{example} 
\label{example:canonical-group-admissible} 
(cf.~\cite[Example~9.6]{phung-2020}) 
A group $\Gamma$ is \emph{Artinian} if every descending sequence of subgroups of $\Gamma$ eventually stabilizes.  
In this case, $\Gamma$ admits a canonical admissible Artinian structure given by all subgroups of $\Gamma^n$ for every $n\geq 1$.  
\par 
Finite groups are Artinian but not all Artinian groups are finite. For instance, given a prime number $p$, 
the subgroup $\mu_{p^\infty} \coloneqq \{ z \in \C^* \colon \exists\, n \geq 0, \, z^{p^n} = 1  \}$ 
of the multiplicative group $(\C^*, \times)$ is Artinian. 
See also \cite{schlette-artinian} for various characterizations of 
Artinian, virtually abelian groups (i.e., groups admitting a finite index abelian subgroup).   

\end{example}

\begin{definition}  [cf. \cite{phung-2020}]
\label{d:admissible-homomorphism} 
Let $A$ be a an admissible Artinian group structure. 
Let $m, n \in \N$. 
A homomorphism of abstract groups $\varphi \colon A^m \to A^n $ 
is said to be \emph{admissible} if the graph $\Gamma_\varphi \coloneqq \{ (x, \varphi(x)) \colon x \in A^m \}$ 
is an admissible subgroup of $A^{m+n}$.  
\end{definition}

Consequently, if $\varphi \colon A^m \to A^n $ is an admissible homomorphism then 
for every admissible subgroups $P \subset A^m$ and $Q \subset A^n$, the groups   
$\varphi(P)$ and $\varphi^{-1} (Q)$ are respectively admissible subgroups of $A^n$ and $A^m$. 
Indeed, let $\pi_m \colon A^m  \times A^n \to A^m$ and $\pi_n \colon A^m \times A^n \to A^n$ 
be respectively the first and the second projections. Then 
it suffices to write $\varphi(P) = \pi_n (\pi_m^{-1}(P) \cap \Gamma_{\varphi})$ 
and $\varphi^{-1} (Q) = \pi_m ( \pi_n^{-1}(Q) \cap \Gamma_\varphi)$ and apply the 
properties (2) and (3) in Definition~\ref{d:general-artinian-structure}. 
 
\begin{remark} 
\label{r:ad-ca}
Homomorphisms of algebraic groups, resp. of compact Lie groups, 
resp. of Artinian groups, and 
morphisms of $R$-modules are all admissible with the canonical admissible Artinian 
structures of algebraic groups, resp. of compact Lie groups, resp. of  Artinian groups, and of $R$-modules respectively.  
\end{remark}

  \begin{lemma}
  \label{l:ad-mor}
Let $A$ be an admissible Artinian group structure. 
Let $m , n \geq 1$  and let $E$ be a finite set. 
Let $\varphi_\alpha \colon A^m \to A^n$ be an admissible homomorphism for every $\alpha \in E$. 
Then $\varphi_E \coloneqq (\varphi_\alpha)_{\alpha \in E} \colon A^m  \to (A^n)^E$, 
$\varphi_E(x) \coloneqq (\varphi_\alpha(x))_{\alpha \in E}$ for  all $x \in A^m$, 
is also an admissible homomorphism.      
\end{lemma}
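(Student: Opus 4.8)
\textbf{Proof plan for Lemma~\ref{l:ad-mor}.}
The plan is to induct on $|E|$ and, at the base step $|E|=2$, to build the graph of $\varphi_E$ out of the graphs of the individual $\varphi_\alpha$ using only the three stability operations (pullback along coordinate injections, intersection, pushforward along coordinate injections) guaranteed by Definition~\ref{d:general-artinian-structure}. First I would reduce to the case $E = \{1,2\}$: if the statement holds for pairs, then writing $E = E' \sqcup \{\beta\}$ and invoking the inductive hypothesis on $E'$ gives an admissible homomorphism $\varphi_{E'}\colon A^m \to (A^n)^{E'}$, and then $\varphi_E$ is, up to the canonical reindexing identification $(A^n)^E \cong (A^n)^{E'} \times A^n$, the map $(\varphi_{E'}, \varphi_\beta)$; so the pair case applied to $\varphi_{E'}$ and $\varphi_\beta$ (whose targets are $A^{n|E'|}$ and $A^n$ — here one uses that $A^{n|E'|}$ carries the induced admissible structure of Definition~\ref{d:general-artinian-structure}, last paragraph) finishes the induction. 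One must be mildly careful that "admissible homomorphism" in the sense of Definition~\ref{d:admissible-homomorphism} was stated for maps $A^m \to A^n$; the cleanest fix is to note that via the fixed bijections $\{1,\dots,k\} \to E$ the target $(A^n)^E$ is literally $A^{n|E|}$ with its induced structure, so all the statements make sense verbatim.

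For the pair case, let $\psi = (\varphi_1, \varphi_2)\colon A^m \to A^n \times A^n = A^{2n}$, with graph $\Gamma_\psi \subset A^m \times A^{2n} = A^{m+2n}$. The key identity is
\[
\Gamma_\psi = \bigl\{ (x, y_1, y_2) : y_1 = \varphi_1(x),\ y_2 = \varphi_2(x) \bigr\} = p_{12}^{-1}(\Gamma_{\varphi_1}) \cap p_{13}^{-1}(\Gamma_{\varphi_2}),
\]
where, writing coordinates of $A^{m+2n}$ as $(x, y_1, y_2)$ with $x$ occupying the first block of $m$ coordinates and $y_1, y_2$ the next two blocks of $n$ coordinates each, $p_{12}\colon A^{m+2n} \to A^{m+n}$ is the coordinate projection onto the $(x,y_1)$-block and $p_{13}\colon A^{m+2n} \to A^{m+n}$ the projection onto the $(x,y_2)$-block — both induced by coordinate injections $\{1,\dots,m+n\}\hookrightarrow\{1,\dots,m+2n\}$ in the sense of Definition~\ref{d:general-artinian-structure}(2). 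By hypothesis $\Gamma_{\varphi_1}$ and $\Gamma_{\varphi_2}$ are admissible subgroups of $A^{m+n}$; hence by property (2) their preimages $p_{12}^{-1}(\Gamma_{\varphi_1})$ and $p_{13}^{-1}(\Gamma_{\varphi_2})$ are admissible subgroups of $A^{m+2n}$, and by property (3) their intersection $\Gamma_\psi$ is an admissible subgroup of $A^{m+2n}$. Therefore $\psi$ is an admissible homomorphism, as required.

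The one genuinely fiddly point — really the only obstacle — is bookkeeping: the injection $\{1,\dots,m+n\}\to\{1,\dots,m+2n\}$ that realizes $p_{13}$ is not order-preserving (it sends $\{1,\dots,m\}$ to $\{1,\dots,m\}$ but $\{m+1,\dots,m+n\}$ to $\{m+n+1,\dots,m+2n\}$), so I would want to state explicitly that Definition~\ref{d:general-artinian-structure}(2) permits arbitrary injections, not merely the standard inclusion, which indeed is what "induced by an injection $\{1,\dots,n\}\to\{1,\dots,m\}$" says. Apart from verifying that $\Gamma_\psi$ is literally the displayed intersection (a trivial unwinding of definitions) and that the reindexing identifications in the inductive step are compatible with the induced admissible structures, there is nothing deep here. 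I expect the whole proof to be four or five lines once the pair case is isolated.
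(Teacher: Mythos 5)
Your proposal is correct and rests on exactly the same identity as the paper's proof: the graph of $\varphi_E$ is the intersection over $\alpha$ of the preimages of the graphs $\Gamma_{\varphi_\alpha}$ under the coordinate projections $A^m \times (A^n)^E \to A^m \times A^n$, which is admissible by properties (2) and (3) of Definition~\ref{d:general-artinian-structure}. The only difference is that your induction on $|E|$ is unnecessary --- the paper writes $\Gamma = \bigcap_{\alpha \in E} \pi_\alpha^{-1}(\Gamma_\alpha)$ directly for the whole finite set $E$ in one step, which your own pair-case formula already generalizes to verbatim.
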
 
  
\begin{proof} 
We need to show that the graph $\Gamma \coloneqq \{ (x, \varphi_E(x)) \colon x \in A^m\}$  
is an admissible subgroup of $A^m \times (A^n)^E \simeq A^{m+ n|E|}$.  
\par 
For every $\alpha \in E$,  
let  $\pi_\alpha \colon A^m \times (A^n)^E \to A^m \times (A^n)^{\{\alpha \} }$ be the canonical projection 
and let $\Gamma_\alpha \coloneqq  \{ (x, \varphi_\alpha(x)) \colon x \in A^m\} \subset A^m \times A^n$ 
be the graph of $\varphi_\alpha$. 
Remark that $\pi_\alpha$ is an admissible homomorphism for every $\alpha \in E$. 
\par 
Then we find that 
\[
\Gamma = \bigcap_{\alpha \in E} \pi_\alpha^{-1} (\Gamma_\alpha)    
\]
which is clearly an admissible subgroup of $A^m \times (A^n)^E$ as an intersection 
of admissible subgroups of $A^m \times (A^n)^E$. The proof is completed. 
\end{proof}

A similar argument using graphs shows easily the following: 

\begin{lemma}
Let $G$ be a monoid and let $A$ be an admissible Artinian group structure. 
Let $F \subset G$ be a finite subset and let $m, n \geq 1$ be integers. 
Let $\tau \colon (A^m)^G \to (A^n)^G$ be an admissible group cellular automaton 
with a memory set $M \subset G$.  
Then the induced map  
$\tau_{F}^{+} \colon (A^m)^{FM} \to (A^n)^F$,  
$\tau_F^{+}(c) \coloneqq \tau(x)\vert_F$ for all $c \in (A^m)^{FM}$ and   $x \in (A^m)^G$ such that $x \vert_{FM}=c$, 
is an admissible homomorphism. 
\end{lemma}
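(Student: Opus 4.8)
The plan is to follow the strategy of the proof of Lemma~\ref{l:ad-mor}: exhibit $\tau_F^{+}$ as a tuple $(\mu_g)_{g\in F}$ of admissible homomorphisms and then invoke Lemma~\ref{l:ad-mor}. First I would fix a memory set $M$ of $\tau$ together with its local defining map $\mu\colon (A^m)^M\to A^n$; both $\tau_F^{+}$ and the auxiliary maps $\psi_g$ introduced below are then well defined precisely because $M$ is a memory set of $\tau$. Since $\tau$ is an \emph{admissible} group cellular automaton, the map $\mu$ is an admissible homomorphism (in one formulation this is exactly what the definition demands, cf.~\cite{phung-2020}), so its graph $\Gamma_\mu\subset (A^m)^M\times A^n$ is an admissible subgroup. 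Throughout I would identify the finite power $(A^m)^{FM}$ with $A^{m|FM|}$, carrying the admissible Artinian structure induced via a bijection of index sets as in the last paragraph of Definition~\ref{d:general-artinian-structure}, so that Lemma~\ref{l:ad-mor} applies to it verbatim.

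Next, for each $g\in F$ I would introduce the ``shift-and-restrict'' map $\psi_g\colon (A^m)^{FM}\to (A^m)^M$ sending $c=x\vert_{FM}$ to $(g\star x)\vert_M$. This $\psi_g$ is visibly the composition of the canonical projection of $(A^m)^{FM}$ onto the block of coordinates carrying $(g\star x)\vert_M$ with the relabeling isomorphism onto $(A^m)^M$ induced by $k\mapsto gk$ on $M$. Both ingredients are admissible homomorphisms — canonical projections as already used in the proof of Lemma~\ref{l:ad-mor}, relabeling isomorphisms of finite powers of $A$ by Definition~\ref{d:general-artinian-structure} — and a routine graph argument (pull back along a coordinate projection, intersect, push forward along a coordinate projection, exactly as in Lemma~\ref{l:ad-mor}) shows that a composition of admissible homomorphisms is again one. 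Hence each $\psi_g$, and therefore each $\mu_g\coloneqq \mu\circ\psi_g\colon (A^m)^{FM}\to A^n$, is an admissible homomorphism; and by the local property of a cellular automaton one has $\mu_g(c)=\tau(x)(g)=\tau_F^{+}(c)(g)$ whenever $x\vert_{FM}=c$, so that $\tau_F^{+}=(\mu_g)_{g\in F}$.

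Finally, applying Lemma~\ref{l:ad-mor} with $(A^m)^{FM}=A^{m|FM|}$ in the role of $A^m$, with $A^n$ in the role of $A^n$, and with $F$ in the role of the finite index set $E$ yields at once that $\tau_F^{+}$ is an admissible homomorphism. The step I expect to be the main obstacle is checking that the maps $\psi_g$ are admissible homomorphisms, i.e.\ arranging the coordinate bookkeeping so that $\psi_g$ is literally a coordinate projection followed by a relabeling. When $G$ is cancellative this is transparent, since $k\mapsto gk$ is then a bijection of $M$ onto $gM$; in general one must additionally note that the (possibly non-injective) relabeling map is still admissible, which holds for every admissible Artinian structure occurring in practice (Examples~\ref{example:canonical-module-admissible} and~\ref{example:canonical-group-admissible}) because the diagonal subgroups involved are algebraic, resp.\ closed, resp.\ submodules, resp.\ subgroups. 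Everything else is the formal bookkeeping already performed in Lemma~\ref{l:ad-mor}.
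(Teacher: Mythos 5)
Your decomposition $\tau_F^{+}=(\mu\circ\psi_g)_{g\in F}$ followed by an appeal to Lemma~\ref{l:ad-mor} is exactly the ``argument using graphs'' the paper gestures at: its own proof of this lemma is a one-line reference to the group-universe version in \cite{phung-2020}, so your write-up is the intended route made explicit, and the reduction to Lemma~\ref{l:ad-mor} together with the graph argument for closure of admissible homomorphisms under composition is correct as far as it goes.

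Two points need attention, and you have put your finger on the second one yourself. First, with the paper's right-shift convention $(g\star x)(h)=x(hg)$, the value $\tau(x)(g)$ for $g\in F$ is read off from $x\vert_{Mg}$, so $\tau(x)\vert_F$ is determined by $x\vert_{MF}$; your $\psi_g$ is well defined only once one has $Mg\subset FM$, so you should either record this inclusion or read the window as $MF$ (this ordering issue comes from the statement, not from you). Second, and more substantively: when $k\mapsto kg$ fails to be injective on $M$ --- possible since $G$ is only a monoid --- your $\psi_g$ duplicates coordinates, and its admissibility reduces to the admissibility of the diagonal of $A^m\times A^m$ (equivalently, of the graph of $\mathrm{id}_{A^m}$). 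This is \emph{not} a formal consequence of Definition~\ref{d:general-artinian-structure}: taking $A=\Z/2\Z$ and $\HH_n$ to be the ``rectangular'' subgroups $H_1\times\cdots\times H_n$ satisfies all four axioms but excludes the diagonal. So your argument, as you concede, settles the case of cancellative $G$ and the concrete structures of Examples~\ref{example:canonical-module-admissible} and~\ref{example:canonical-group-admissible}, but does not by itself yield the lemma in its stated abstract generality (this shows the argument breaks, not that the conclusion fails --- in the rectangular structure the only admissible local maps are trivial and the conclusion holds vacuously). In fairness, the paper is in the same position: the assertion in the proof of Lemma~\ref{l:ad-mor} that the coordinate projections are admissible homomorphisms in the sense of Definition~\ref{d:admissible-homomorphism} already presupposes the diagonal, and the cited group-universe proof never meets a non-injective translation; you have therefore correctly isolated the one non-routine point hidden in the paper's ``mutatis mutandis''.
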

 
 \begin{proof} 
 The proof of the lemma is the same, \emph{mutatis mutandis}, as the proof of  \cite[Lemma~9.20]{phung-2020} for the case of group universes. 
 \end{proof}

 \subsection{Admissible group subshifts}

By analogy with the classical notion of \emph{group shifts} with finite group alphabets (see, e.g.,~\cite{fiorenzi-periodic}), 
we have the following notion of admissible group subshifts whose alphabets are admissible Artinian group structures introduced in \cite{phung-2020}.   

\begin{definition}
\label{d:admissible-subgroup-shift}
Let $G$ be a monoid and let $A$ be an admissible Artinian group structure. 
A subshift $\Sigma \subset A^G$ is called an \emph{admissible group subshift}  
if it is closed in $A^G$ with respect to the prodiscrete topology and 
if $\Sigma_E$ is an admissible subgroup of $A^E$
for every finite subset $E \subset G$.        
\end{definition}

\par 
\begin{example} 
\label{example:canonical-admissible-for-intro} 
Let $G$ be a monoid and let $A$ be an Artinian group (resp. an Artinian module over a ring  $R$). 
 Let $\Sigma$ be a closed subshift of $A^G$ with respect to the prodiscrete topology which  
is also an abstract subgroup (resp. an $R$-submodule). 
Then  
$\Sigma$ is   an admissible group subshift of $A^G$ with respect to 
 the canonical admissible Artinian group  structure on $A$ (cf.~Example~\ref{example:canonical-group-admissible}, Example~\ref{example:canonical-module-admissible}). 
  
 \end{example}

\subsection{Admissible group cellular automata} 

\begin{definition} 
Let $G$ be a monoid and let $A$ be an admissible Artinian group structure. 
Let $n_1, n_2 \geq 1$ and let $\Sigma_1 \subset (A^{n_1})^G$,   
$\Sigma_2 \subset (A^{n_2})^G$ 
be admissible group subshifts. 
A map $\tau \colon \Sigma_1 \to \Sigma_2$ is called an \emph{admissible group cellular automaton}  
if $\tau$ extends to a cellular automaton $(A^{n_1})^G \to (A^{n_2})^G$ admitting a memory set $M \subset G$ such that the associated local defining map 
$\mu \colon (A^{n_1})^M \to A^{n_2}$ is an admissible homomorphism 
(cf.~Definition~\ref{d:admissible-homomorphism} and Remark~\ref{r:ad-ca}). 
\par
When $n_1=n_2$ and $\Sigma_1 = \Sigma_2= \Sigma$, 
we denote by $\End_{G\text{-grp}} (\Sigma)$ 
the set of all such admissible group cellular automata.  
\end{definition} 
\par 

The following result provides a large class of admissible group subshifts and shows that 
admissible group subshifts are stable under taking images of admissible group cellular automata. 

\begin{theorem} 
\label{t:artinian-general-sft} 
Let $G$ be a countable monoid  
and let $A$ be an admissible Artinian group structure. 
Let $m, n \geq 1$ be integers.  
The following hold: 
\begin{enumerate} [\rm (i)]
\item 
if $D \subset G$ is a finite subset and $P \subset A^D$ is an admissible subgroup,  
then $\Sigma(A^G; D, P)$ is an admissible group subshift of $A^G$. 
\item 
if $\tau \colon (A^m)^G \to (A^n)^G$ is an admissible group cellular automaton and 
  $\Sigma \subset (A^m)^G$, $\Lambda \subset (A^n)^G$  
are admissible group subshifts, then   
$\tau(\Sigma)$, $\tau^{-1}(\Lambda)$ are  respectively 
admissible group subshifts of $(A^n)^G$, $(A^m)^G$. 
\end{enumerate} 
\end{theorem}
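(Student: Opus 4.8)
The plan is to reduce everything to statements about admissible subgroups of finite powers $A^E$, using the two key facts already available: the elementary identity for subshifts of finite type in Lemma~\ref{l:restriction-sft}, and the stability of admissible subgroups under projections, preimages, and intersections (Definition~\ref{d:general-artinian-structure}), together with the descending chain condition.

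For part (i): $\Sigma \coloneqq \Sigma(A^G;D,P)$ is closed in the prodiscrete topology by construction, so only the statement that $\Sigma_E$ is an admissible subgroup of $A^E$ for every finite $E \subset G$ needs proof. First I would observe that $\Sigma$ is a subgroup of $A^G$: for $x,y \in \Sigma$ and $g \in G$ we have $(g\star(x y^{-1}))\vert_D = (g\star x)\vert_D \cdot ((g\star y)\vert_D)^{-1} \in P$ since $P$ is a subgroup and the shift action commutes with the pointwise group operation and with restriction. Hence $\Sigma_E$ is at least a subgroup of $A^E$ for every $E$. To see it is admissible, enlarge $E$ if necessary to a finite set $F \supset E$ with $gD \subset F$ for all $g$ in some finite set $K$; the natural thing is to exhibit $\Sigma_F$ inside $A^F$ as a finite intersection of preimages of translates of $P$ under coordinate projections $A^F \to A^{gD}$, then restrict from $F$ down to $E$ by one more projection. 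The subtlety is that $\Sigma_F$ is cut out by \emph{all} $g \in G$, not just finitely many; but only those $g$ with $gD \cap F \neq \varnothing$ — equivalently those contributing a nontrivial constraint on coordinates in $F$ — are finitely many, and the remaining constraints are vacuous on $A^F$. So $\Sigma_F = \bigcap_{g : gD \subset F} \pi_{gD}^{-1}(P_g)$ where $\pi_{gD}\colon A^F \to A^{gD}$ is the coordinate projection and $P_g \subset A^{gD}$ is the copy of $P$ transported by the bijection $D \cong gD$; one must check that dropping the $g$ with $gD \not\subset F$ is harmless, which follows because those impose no condition among the $F$-coordinates. Each $\pi_{gD}^{-1}(P_g)$ is admissible by property (2), the finite intersection is admissible by property (3), and then $\Sigma_E = \pi_{E}(\Sigma_F)$ is admissible by property (2) again.

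For part (ii): $\tau^{-1}(\Lambda)$ is the easy half. It is closed (preimage of a closed set under a continuous map) and $G$-invariant. For a finite $E \subset G$, let $M$ be the memory set of $\tau$; then $(\tau^{-1}(\Lambda))_E$ is determined by the values on $EM$, and using the admissible homomorphism $\tau_E^{+}\colon (A^m)^{EM} \to (A^n)^E$ from the last lemma of \S\ref{s:admissible-subshift} together with $\Sigma_{EM}$ and $\Lambda_E$ admissible, one writes $(\tau^{-1}(\Lambda))_E$ as a projection of $(\tau_E^{+})^{-1}(\Lambda_E) \cap \Sigma_{EM}$ to $E$, which is admissible by the stability properties. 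The genuinely harder half is $\tau(\Sigma)$, because the image of a closed subshift under a cellular automaton over a \emph{monoid} need not be closed, and even when it is, $(\tau(\Sigma))_E$ is a priori only a quotient-type object, not obviously admissible. I expect this to be the main obstacle. The approach I would take mirrors the passage to subshifts of finite type in the $\N^r$ case: use the admissible Artinian condition (the DCC in property (4)) to show that the descending intersections defining $\tau(\Sigma)_E$ stabilize, forcing $\tau(\Sigma)$ to be cut out by finitely many local conditions and hence closed; concretely, for each finite $E$ one sets $Q_F \coloneqq \pi_E\big(\tau_F^{+}(\Sigma_{FM})\big) \subset A^E$ for finite $F \supset E$, observes $(Q_F)_F$ is a descending chain of admissible subgroups of $A^E$ (admissible because $\tau_F^{+}$ is an admissible homomorphism and images of admissible subgroups are admissible), invokes (4) to get stabilization at some $F_0$, and then argues $\tau(\Sigma)_E = Q_{F_0}$ — the inclusion $\subset$ being immediate and $\supset$ requiring a compactness/diagonal argument to lift a consistent family of local preimages in $\Sigma$ to a global point, which is exactly where the closedness of $\Sigma$ and the stabilization are used. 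Finally $\tau(\Sigma)$ is closed because membership is now checked by the finitely many window conditions coming from $F_0$, and $\tau(\Sigma)_E = Q_{F_0}$ is admissible, completing the proof. I would also note that parts (i) and (ii) together (applying (i) to the pullback of $P$ under $\tau_E^{+}$) can streamline the $\tau^{-1}(\Lambda)$ argument, but the $\tau(\Sigma)$ argument seems to genuinely need the chain condition.
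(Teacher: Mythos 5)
The paper gives no self-contained argument here: it defers entirely to \cite[Theorems~9.16 and~9.21]{phung-2020}, whose proofs run on the descending chain condition (property (4) of Definition~\ref{d:general-artinian-structure}) together with the inverse-limit Lemma~\ref{l:inverse-limit-ad-grp}. Measured against that, your outline for $\tau(\Sigma)$ is essentially the right argument, but part (i) and your treatment of $\tau^{-1}(\Lambda)$ contain a genuine gap. In (i) you assert that $\Sigma_F$ equals the intersection of the finitely many window conditions internal to $F$, the remaining constraints being ``vacuous on the $F$-coordinates.'' This is false: a window straddling $F$ and its complement does restrict the $F$-coordinates, and more globally, local admissibility does not imply extendability to a configuration in $\Sigma$. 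Concretely, take $G=\Z$, $A=\Z/2\Z$ with its canonical Artinian structure, $D=\{0,1\}$ and $P=\{(a,0)\colon a\in A\}\subset A^D$. The defining condition reads $x(1+g)=0$ for all $g\in\Z$, so $\Sigma(A^G;D,P)=\{0^G\}$ and $\Sigma_{\{0,1\}}=\{(0,0)\}$; but the only condition internal to $F=\{0,1\}$ (coming from $g=0$) cuts out $A\times\{0\}$, which is strictly larger. What your formula computes is the set of locally admissible patterns, not $\Sigma_F$. The same defect appears for $\tau^{-1}(\Lambda)$: the inclusion $(\tau^{-1}(\Lambda))_E\subset\pi_E\bigl((\tau_E^{+})^{-1}(\Lambda_E)\bigr)$ is generally strict, since a pattern $c$ with $\tau_E^{+}(c)\in\Lambda_E$ need not extend to some $x$ with $\tau(x)\in\Lambda$ globally. (A smaller slip: with the right shift action used here the relevant window is $Dg$, not $gD$, which matters for noncommutative $G$.)

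The repair is exactly the mechanism you correctly sketch for $\tau(\Sigma)$, and it is needed in all three statements, not just that one. For a finite $E\subset G$ and an exhaustion $(F_k)$ of $G$, the sets $\pi_E$ of the locally admissible patterns on $F_k$ (respectively, of $(\tau_{F_k}^{+})^{-1}(\Lambda_{F_k})$) form a descending sequence of admissible subgroups of $A^E$ by properties (2) and (3); property (4) forces it to stabilize; and Lemma~\ref{l:inverse-limit-ad-grp} --- not compactness, which is unavailable since $A$ may be infinite and noncompact --- lifts any element of the stable value to a genuine configuration, yielding equality with $\Sigma_E$ (resp.\ $(\tau^{-1}(\Lambda))_E$) and, for the same reason, closedness. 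Your $Q_{F}$ argument for $\tau(\Sigma)$ has precisely this shape and is sound in outline, modulo replacing the vague ``compactness/diagonal argument'' by an explicit appeal to Lemma~\ref{l:inverse-limit-ad-grp}; one should also note that the stabilization index $F_0$ depends on $E$, so closedness of $\tau(\Sigma)$ still requires the inverse-limit step rather than following from ``finitely many window conditions.''
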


\begin{proof} 
It is a direct extension of \cite[Theorem~9.16]{phung-2020} and \cite[Theorem~9.21]{phung-2020} 
where the case when $G$ is a countable group is proved.  
The modifications to case when $G$ is a countable monoid are straightforward and we omit the details.  
\end{proof}

\section{Monoid of admissible Markov type}
\label{s:markov-type}

By analogy with the definition of groups of Markov type given in  \cite[Definition~4.1]{schmidt-book},   
we introduce the class of \emph{monoids of admissible Markov type} as follows. 

\begin{definition}
\label{d:markov-type-alg}
A countable monoid $G$ said to be of \emph{admissible Markov type} 
if for every admissible Artinian group structure $A$, 
every admissible group subshift $\Sigma$ of $A^G$ is of finite type.  
\end{definition}
\par 
By the descending chain condition of admissible Artinian group structures, 
 every finite monoid is a monoid of admissible Markov type. 
\par
We have the following result which is sufficient for our purpose: 

\begin{theorem} 
\label{t:descending-ad-sft}
Let $G$ be a countable monoid which is of admissible Markov type. 
Let $V$ be an admissible Artinian group structure.  
Then every descending sequence of admissible group subshifts of $V^G$ 
eventually stabilizes. 
\end{theorem}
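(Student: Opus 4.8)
The plan is to reduce the statement to the descending chain condition (DCC) for admissible subgroups, exactly as in Definition~\ref{d:general-artinian-structure}(4), by transferring a descending sequence of subshifts to a descending sequence of subgroups of a power of $V$. First I would invoke the admissible Markov type hypothesis: given a descending sequence $\Sigma_0 \supset \Sigma_1 \supset \Sigma_2 \supset \cdots$ of admissible group subshifts of $V^G$, each $\Sigma_k$ is of finite type, hence $\Sigma_k = \Sigma(V^G; D_k, P_k)$ for some finite defining window $D_k \subset G$ and some admissible subgroup $P_k \subset V^{D_k}$ (the admissibility of $P_k$ follows from $\Sigma_k$ being an admissible group subshift, since $P_k = (\Sigma_k)_{D_k}$).

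**Key steps.** The crucial move is to pass to a \emph{common} defining window. Fix an admissible exhaustion $(E_n)_{n\geq 0}$ of $G$ (which exists by the remark after Definition~\ref{d:admissible-sequence}). For each $k$, choose $n_k$ with $D_k \subset E_{n_k}$; since $\Sigma(V^G; D_k, P_k) = \Sigma(V^G; E_{n_k}, Q_k)$ where $Q_k$ is the preimage of $P_k$ under the restriction $V^{E_{n_k}} \to V^{D_k}$ (an admissible homomorphism, so $Q_k$ is an admissible subgroup of $V^{E_{n_k}}$), we may assume each $\Sigma_k$ has defining window $E_{n_k}$. Now for each fixed $n$, set $R_k^{(n)} \coloneqq (\Sigma_k)_{E_n} \subset V^{E_n}$; this is an admissible subgroup, and $R_0^{(n)} \supseteq R_1^{(n)} \supseteq \cdots$ is a descending sequence of admissible subgroups of $V^{E_n}$, which stabilizes by Definition~\ref{d:general-artinian-structure}(4), say from index $k_n$ onward. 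The plan is then to combine these: one wants a single index $K$ after which $\Sigma_k$ is constant. Since $\Sigma_k = \bigcap_n \Sigma(V^G; E_n, R_k^{(n)})$ and each $\Sigma_k$ is closed, it suffices to show $R_k^{(n)}$ stabilizes \emph{uniformly} in $n$ past a single $K$ — equivalently, that the sequence of closed subgroups $\Sigma_k \subset V^G$ stabilizes. For this I would run a diagonal-type argument: the difficulty is that the stabilization indices $k_n$ might a priori grow with $n$.

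**Resolving the uniformity.** To control this, observe that for $k \geq k_1$ the window $E_1$-datum is fixed, then use the shift-invariance and the window structure: if $\Sigma_{k+1} \subsetneq \Sigma_k$, there is some $g\in G$ and some finite window realizing the strict inclusion, but by $G$-invariance and the fact that all $\Sigma_k$ share the growing windows $E_{n_k}$, a strict drop at stage $k$ forces a strict drop in $R_k^{(n)}$ for some $n$ bounded in terms of the data. More cleanly: the map $k \mapsto \Sigma_k$ factors through the inverse system $(V^{E_n})_n$, and $\Sigma_k = \varprojlim_n R_k^{(n)}$; a strict inclusion $\Sigma_{k+1}\subsetneq\Sigma_k$ implies $R_{k+1}^{(n)} \subsetneq R_k^{(n)}$ for all sufficiently large $n$, hence for \emph{every} such $n$ the stabilization index is $> k$, contradicting that $R_{k}^{(n)}$ stabilizes for each individual $n$ once we know there are infinitely many strict drops. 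So if infinitely many inclusions were strict, fix any $n$ large enough to witness the first, say, strict drop; then $R_\bullet^{(n)}$ would admit infinitely many strict drops too, contradicting Definition~\ref{d:general-artinian-structure}(4). Therefore only finitely many inclusions $\Sigma_{k+1}\subseteq\Sigma_k$ are strict, i.e.\ the sequence stabilizes.

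**Main obstacle.** I expect the genuine subtlety to be exactly the uniformity step: justifying that a strict inclusion of the \emph{global} subshifts is detected on a window $E_n$ whose index does not race ahead of $k$. The finite type hypothesis (via admissible Markov type) is what makes this work — it pins each $\Sigma_k$ to a finite window — but one must be careful to use a \emph{single} exhaustion and argue that strictness propagates to all large windows, so that a single stabilizing index $k_n$ for one well-chosen $n$ already bounds the number of strict drops. Once that is in place, the rest is bookkeeping with restrictions and preimages under admissible homomorphisms, all of which stay within the class of admissible subgroups by Definition~\ref{d:general-artinian-structure}(2),(3).
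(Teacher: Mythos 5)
There is a genuine gap, and it sits exactly where you flag ``the genuine subtlety'': the uniformity step does not work as written. Your observation that a strict inclusion $\Sigma_{k+1}\subsetneq\Sigma_k$ forces $R_{k+1}^{(n)}\subsetneq R_k^{(n)}$ for all \emph{sufficiently large} $n$ is correct (closedness of $\Sigma_{k+1}$ lets a cylinder separate a point of $\Sigma_k\setminus\Sigma_{k+1}$ from $\Sigma_{k+1}$). But ``sufficiently large'' depends on $k$: the $j$-th strict drop is only guaranteed to be visible on windows $E_n$ with $n\geq n(k_j)$, and $n(k_j)$ may tend to infinity with $j$. So fixing one $n$ that witnesses the first strict drop does \emph{not} force $R_\bullet^{(n)}$ to have infinitely many strict drops, and the contradiction with Definition~\ref{d:general-artinian-structure}(4) evaporates. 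Windowwise DCC gives a stabilization index $K_n$ for each $n$, but nothing you have written bounds $\sup_n K_n$. That this cannot be repaired by windowwise DCC alone is shown by universes that are not of admissible Markov type with a finite alphabet: there every window trivially satisfies DCC, yet strictly descending chains of closed group subshifts exist. Relatedly, you invoke the Markov-type hypothesis in the wrong place: making each individual $\Sigma_k$ of finite type with window $D_k$ buys nothing, because the $D_k$ need not be uniformly bounded, and your argument never uses them again.

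The proof the paper relies on (it defers to \cite[Proposition~9.17]{phung-2020} and the proof of \cite[Theorem~4.3]{phung-2020}) applies the hypothesis instead to the \emph{intersection} $\Sigma_\infty\coloneqq\bigcap_k\Sigma_k$. One first shows, via the inverse-limit Lemma~\ref{l:inverse-limit-ad-grp}, that $(\Sigma_\infty)_E=\bigcap_k(\Sigma_k)_E$ for every finite $E\subset G$ (the right-hand side stabilizes by DCC, and every pattern in the stabilized group extends to a configuration of every $\Sigma_k$ simultaneously by a coset inverse-limit argument), so $\Sigma_\infty$ is an admissible group subshift. Admissible Markov type then yields a \emph{single} finite window $D$ with $\Sigma_\infty=\Sigma(V^G;D,(\Sigma_\infty)_D)$. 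Since $\Sigma_k\subseteq\Sigma(V^G;D,(\Sigma_k)_D)$ holds for every $G$-invariant $\Sigma_k$, as soon as the descending sequence $((\Sigma_k)_D)_k$ has stabilized to $(\Sigma_\infty)_D$ one gets $\Sigma_k\subseteq\Sigma(V^G;D,(\Sigma_\infty)_D)=\Sigma_\infty\subseteq\Sigma_k$, i.e., stabilization. This is the missing idea in your proposal: the finite-type property must be extracted for $\Sigma_\infty$, which supplies the one window on which a strict drop would have to be detected.
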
 

\begin{proof} 
This is a direct extension of \cite[Proposition~9.17]{phung-2020}. 
Actually, the theorem generalizes the part (a)$\implies$(c) of \cite[Theorem~4.3]{phung-2020} and \cite[Theorem~10.1]{cscp-2020} 
which are stated for algebraic group subshifts over countable group universes. 
It is a straightforward verification that their proofs also work for countable monoid universes 
and admissible group subshifts. 
\end{proof}
 
\par  
Our main  goal in this section is to give a proof of the following  result which is analogous to \cite[Theorem~1.6]{phung-2020}, 
\cite[Theorem~4.2]{schmidt-book} and \cite[Propositoin~3.2]{kitchens-schmidt}.

\begin{theorem}
\label{t:main-N-r-sft} 
Let $F$ be a finite monoid and let $r \in \N$. 
Then the monoid $\N^r \times F$ is of admissible Markov type. 
\end{theorem}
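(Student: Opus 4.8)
The plan is to reduce the statement about the monoid $\N^r \times F$ to a statement about the group $\Z^r$ (or rather about finite abelian groups and then $\Z$) by an induction on $r$, pushing the finiteness of the alphabet obstructions back through restriction lemmas of the type already available (Lemma~\ref{l:restriction-sft} and Theorem~\ref{t:artinian-general-sft}). First I would set up the base case $r=0$: the monoid $F$ is finite, hence of admissible Markov type by the descending chain condition (as already observed in the excerpt right after Definition~\ref{d:markov-type-alg}); more precisely, for a finite universe every subshift equals its defining-window subshift with window $D=F$, so any admissible group subshift $\Sigma\subset A^F$ is trivially $\Sigma(A^F;F,\Sigma_F)$ with $\Sigma_F$ an admissible subgroup, hence of finite type.

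For the inductive step I would write $\N^{r}\times F = \N \times (\N^{r-1}\times F)$ and let $H=\{0\}\times(\N^{r-1}\times F)$, which is a submonoid isomorphic to $\N^{r-1}\times F$, of admissible Markov type by the inductive hypothesis. The key idea, mirroring the classical Kitchens--Schmidt argument for $\Z^r$, is: given an admissible group subshift $\Sigma\subset A^{\N\times(\N^{r-1}\times F)}$, form the ``super-alphabet'' $B=A^{H}$ — but since $H$ is infinite this must be done with care. Instead I would proceed as in \cite{phung-2020}: choose an admissible exhaustion $(E_n)$ of $H$ and consider, for each $n$, the finite sets $\{k\}\times E_n$ in the $\N$-direction; the subshift $\Sigma$ is the projective limit (as a closed subgroup) of its restrictions to the ``slabs'' $[0,m]\times E_n$. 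Using that $H=\N^{r-1}\times F$ is of admissible Markov type together with Theorem~\ref{t:descending-ad-sft}, the $H$-direction is automatically ``finite type'': there is a finite window $D\subset H$ and an admissible subgroup $P\subset A^D$ with the restriction of $\Sigma$ in each $\N$-slice being a single admissible group subshift of $(A^D)^{\N}=(B')^{\N}$ where $B'=A^D$ is an admissible Artinian group structure over the finite set $D$. Concretely one applies Lemma~\ref{l:restriction-sft} with the submonoid $H$ and $E=\{$a transversal of cosets$\}$ to rewrite $\Sigma$ as $\Sigma(A^G;H\cdot\{\text{transversal}\},\Sigma')$ for a subshift $\Sigma'$ over the universe $\N$ with alphabet $B'$.

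Having reduced to the universe $\N$ with an admissible Artinian group alphabet, I would invoke the one-dimensional case: every admissible group subshift of $(B')^{\N}$ is of finite type. This is the heart of the matter and I expect it to be the main obstacle; it is the analogue of Schmidt's Markov property for $\Z$ and should follow from the descending chain condition built into the admissible Artinian structure. The argument: an admissible group subshift $\Lambda\subset (B')^{\N}$ is determined by the compatible system of admissible subgroups $\Lambda_{[0,n]}\subset (B')^{[0,n]}$; the ``transition'' subgroups $\Lambda_{[0,n+1]}$ project onto $\Lambda_{[0,n]}$ and, by shift-invariance, onto $\Lambda_{[1,n+1]}\cong\Lambda_{[0,n]}$; the intersections/pullbacks governing how much memory is needed form a descending chain of admissible subgroups in a fixed $(B')^{k}$, which must stabilize by axiom~(4) of Definition~\ref{d:general-artinian-structure}. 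Once it stabilizes at some window size $w$, one checks $\Lambda=\Sigma((B')^{\N};[0,w],\Lambda_{[0,w]})$, i.e. $\Lambda$ is of finite type. Then unwinding through Lemma~\ref{l:restriction-sft} (with $D\subset H$ finite and the finite window $[0,w]$ in the $\N$-factor combining to a finite window in $\N^r\times F$) shows the original $\Sigma$ is of finite type, completing the induction. The technical care lies in (a) making the ``slab'' reduction to finite alphabets rigorous when $H$ is infinite — here one genuinely uses that $H$ itself is of admissible Markov type, not merely that finite pieces are — and (b) checking the induced alphabet $A^D$ and induced homomorphisms are admissible, which is exactly what Lemma~\ref{l:ad-mor} and the unnamed lemma after it were prepared for. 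The commutativity/structure of $\N^r\times F$ (so that $H$ is a submonoid and shifting in the $\N$-direction preserves restrictions to $H$-slices) is used implicitly throughout and should be stated explicitly.
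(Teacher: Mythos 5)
Your overall architecture (induction on $r$, splitting off one $\N$ factor, base case the finite monoid $F$, descending chain conditions doing the work) matches the paper, which proves the statement by combining Theorem~\ref{t:ad-group-Z-d} ($H\times\N$ is of admissible Markov type when $H$ is) with Proposition~\ref{p:main-by-finite}. But there is a genuine gap at the pivotal step, and it is in the \emph{order} of the two finitizations. You claim that, because $H=\N^{r-1}\times F$ is of admissible Markov type, ``the $H$-direction is automatically finite type: there is a finite window $D\subset H$ \dots with the restriction of $\Sigma$ in each $\N$-slice being a single admissible group subshift of $(A^D)^\N$.'' This is not automatic and is not what the Markov property of $H$ gives you: that property applies to admissible group subshifts over the \emph{universe} $H$, whereas $\Sigma$ lives over $\N\times H$. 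To extract subshifts over $H$ from $\Sigma$ you must restrict to slabs $\{0,\dots,m\}\times H$, and to recover $\Sigma$ from finitely many such slabs you need precisely the $\N$-direction finitization that your plan defers to the end. Your invocation of Lemma~\ref{l:restriction-sft} is also transposed: the lemma needs the transversal $E$ to be \emph{finite} (so that $A^E$ is an admissible Artinian group structure and $DE$ is finite), so it can only be applied with $E$ a finite slab in the $\N$-direction and $H$ as the infinite submonoid --- not with $E$ a transversal of $H$-cosets, which is a copy of $\N$.

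The paper resolves this by doing the reductions in the opposite order. It first proves that $\Sigma$ is determined by the slab $\Omega=H\times\{0,\dots,N\}$: it introduces the sets $X_n$ of restrictions to $H\times\{n\}$ of configurations vanishing on $H\times\{0,\dots,n-1\}$, shows (Lemmas~\ref{claim:1} and~\ref{claim:2}, via Lemma~\ref{l:inverse-limit-ad-grp} on inverse limits of \emph{cosets} of admissible subgroups) that these form a descending sequence of admissible group subshifts of $V^H$, and invokes Theorem~\ref{t:descending-ad-sft} --- this is where the Markov property of $H$ enters first --- to stabilize the sequence at some $N$. A cocycle-correction argument (forming $z=z_0 z_1^{-1}$ and correcting by an element of $L_{v,N+1}$) then shows $\Sigma=\Sigma(V^G;\Omega,\Sigma_\Omega)$. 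Only after this does it view $\Sigma_\Omega$ as a subshift of $U^H$ with the \emph{finite} thickened alphabet $U=V^{\{0_H\}\times G_N}$ and apply the Markov property of $H$ a second time, finishing with Lemma~\ref{l:restriction-sft}. Your sketch of the ``one-dimensional case'' over $\N$ captures the stabilization idea but omits the two ingredients that make the slab-determination step work in the presence of an infinite $H$ and a possibly nonabelian alphabet: the nonemptiness of inverse limits of coset translates, and the group-theoretic correction needed because the relevant fibers are cosets rather than subgroups. Without the slab-determination step carried out first, the reduction to the universe $\N$ with alphabet $A^D$ cannot get off the ground.
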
 

We shall need the  following key technical lemmata 
on inverse systems 
of admissible Artinian group structures.

  \begin{lemma}
\label{l:lemma-artinian-1} 
Let $\Gamma$ be an admissible Artinian group structure. 
Let $(X_n)_{n \geq 0}$ be a descending sequence of left translates of admissible subgroups of $\Gamma$. 
Then the sequence $(X_n)_{n \geq 0}$ eventually stabilizes. 
\end{lemma}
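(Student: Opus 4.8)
The statement concerns a descending sequence $(X_n)_{n\geq 0}$ of \emph{left translates} of admissible subgroups of $\Gamma$, i.e. each $X_n = g_n H_n$ for some $g_n \in \Gamma$ and some $H_n \in \HH_1$ (the admissible subgroups of $\Gamma^1 = \Gamma$). The plan is to reduce the problem about cosets to the descending chain condition for admissible subgroups themselves, which is axiom (4) of Definition~\ref{d:general-artinian-structure}.

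First I would record the elementary fact that a left coset $gH$ of a subgroup, once we know it is nonempty, determines $H$ as $H = x^{-1}(gH)$ for any $x \in gH$; equivalently, two left cosets that intersect and have the same underlying subgroup coincide, and more usefully, if $g'H' \subseteq gH$ with both cosets of subgroups $H', H$, then picking $x \in g'H'$ we get $H' = x^{-1}(g'H') \subseteq x^{-1}(gH) = H$. So the inclusions $X_{n+1} \subseteq X_n$ force a descending chain $H_{n+1} \subseteq H_n$ of the associated subgroups (after discarding any empty terms — but a descending sequence cannot become empty and then nonempty again, and if it is eventually empty it is trivially eventually stabilized, so we may assume all $X_n$ are nonempty). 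Each $H_n$ lies in $\HH_1$, so by axiom (4) the chain $(H_n)_{n\geq 0}$ stabilizes: there is $N$ with $H_n = H_N =: H$ for all $n \geq N$.

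Next, for $n \geq N$ we have $X_n = g_n H$ and $X_{n+1} = g_{n+1}H \subseteq g_n H$, which forces $g_{n+1} \in g_n H$, hence $g_{n+1}H = g_n H$, i.e. $X_{n+1} = X_n$. Therefore $(X_n)_{n\geq 0}$ stabilizes from index $N$ onward, which is the claim.

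The only subtlety — and the step I would be most careful with — is the bookkeeping when some $X_n$ may a priori be empty, since ``left translate of an admissible subgroup'' literally written as $gH$ is always nonempty, but one should be sure the hypothesis is being read that way; under that reading the argument above is complete. A secondary point to state cleanly is the derivation $g'H' \subseteq gH \implies H' \subseteq H$, which is where the coset-to-subgroup reduction happens and is the genuine content of the proof; everything else is the invocation of axiom (4). I expect no real obstacle here — the lemma is a routine ``cosets inherit the DCC'' statement — and the same reasoning will presumably be reused in the higher-rank inverse-system lemmata that follow.
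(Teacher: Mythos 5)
Your proof is correct and complete: the reduction $g'H' \subseteq gH \implies H' \subseteq H$ (via any point $x$ of the smaller coset, using that $x^{-1}(gH)=H$ whenever $x\in gH$) is exactly the right coset-to-subgroup step, after which axiom (4) of Definition~\ref{d:general-artinian-structure} and the observation that two nested cosets of the same subgroup coincide finish the argument; your worry about empty terms is moot since $gH$ always contains $g$. The paper itself gives no proof here, only a citation to \cite[Lemma~9.14]{phung-2020}, and your argument is the standard one that reference contains.
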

 
 \begin{proof} 
 See \cite[Lemma~9.14]{phung-2020}. 
 \end{proof}

\begin{lemma}
\label{l:inverse-limit-ad-grp} 
Let $(\Gamma_i, \varphi_{ij})_{i,j \in I}$ 
be an inverse system indexed by a countable directed set $I$, 
where every $\Gamma_i$ is an admissible Artinian group structure and the transition maps  
$\varphi_{i j} \colon \Gamma_j \to \Gamma_i$ are admissible homomorphisms for all $i \prec j$. 
Suppose that $X_i$, for every $i \in I$, is  
a left translate of an admissible subgroup of $\Gamma_i$ and that $\varphi_{ij}(X_j)\subset X_i$ 
for all  $i\prec j$ in $I$. 
Then the induced inverse subsystem $(X_i)_{i \in I}$ satisfies 
$\varprojlim_{i \in I} X_i \neq \varnothing$.  
\end{lemma}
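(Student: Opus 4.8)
The plan is to show that the inverse limit is nonempty by producing a compatible family $(x_i)_{i\in I}$ via a standard but careful diagonal-type argument, using the Artinian hypothesis to control the ``shrinking'' of the spaces $X_i$ along the directed set. First I would enumerate $I = \{i_1, i_2, \dots\}$ using countability, and reduce to the case where $I$ is a countable chain $j_1 \prec j_2 \prec \cdots$ which is cofinal in $I$: since $I$ is a countable directed set it admits such a cofinal chain, and $\varprojlim_{i\in I} X_i = \varprojlim_k X_{j_k}$ canonically (a compatible family on the cofinal chain extends uniquely to all of $I$ by directedness). So it suffices to treat the case $I = \N$ with transition maps $\varphi_k \colon X_{k+1} \to X_k$, each $X_k$ a left translate of an admissible subgroup of the admissible Artinian group structure $\Gamma_k$, and $\varphi_k(X_{k+1}) \subset X_k$.

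Next I would pass to the ``eventual images''. For each $k$ and each $m \geq k$ set $Y_k^{(m)} \coloneqq (\varphi_k \circ \varphi_{k+1} \circ \cdots \circ \varphi_{m-1})(X_m) \subset X_k$, with $Y_k^{(k)} = X_k$. The key observation is that each $Y_k^{(m)}$ is again a left translate of an admissible subgroup of $\Gamma_k$: this follows because the composite transition map is an admissible homomorphism (compositions of admissible homomorphisms are admissible — a graph argument, or just the fact that admissible homomorphisms send admissible subgroups to admissible subgroups), and the image of a left translate $gH$ of an admissible subgroup $H$ under an admissible homomorphism $\psi$ is $\psi(g)\psi(H)$, a left translate of the admissible subgroup $\psi(H)$. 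Now for fixed $k$ the sequence $(Y_k^{(m)})_{m \geq k}$ is a descending sequence of left translates of admissible subgroups of $\Gamma_k$, so by Lemma~\ref{l:lemma-artinian-1} it stabilizes: there is $m_k$ with $Y_k^{(m)} = Y_k^{(m_k)} \eqqcolon Z_k$ for all $m \geq m_k$. Crucially each $Z_k$ is nonempty (each $X_m$ is nonempty, being a left translate of a subgroup). Moreover $\varphi_k(Z_{k+1}) = Z_k$: indeed $\varphi_k(Y_{k+1}^{(m)}) = Y_k^{(m)}$ for all $m \geq k+1$, so taking $m \geq \max(m_k, m_{k+1})$ gives $\varphi_k(Z_{k+1}) = Z_k$; in particular the maps $\varphi_k$ restrict to \emph{surjections} $Z_{k+1} \twoheadrightarrow Z_k$ between nonempty sets.

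Finally, since each $\varphi_k \colon Z_{k+1} \to Z_k$ is surjective and every $Z_k$ is nonempty, an inverse limit of a countable tower of nonempty sets with surjective transition maps is nonempty: pick any $x_1 \in Z_1$, then inductively choose $x_{k+1} \in Z_{k+1}$ with $\varphi_k(x_{k+1}) = x_k$, which is possible by surjectivity. This yields a compatible family $(x_k)_k$, i.e.\ an element of $\varprojlim_k Z_k \subset \varprojlim_k X_k$, so $\varprojlim_{i\in I} X_i \neq \varnothing$. I expect the main obstacle to be the bookkeeping in the reduction to a countable chain and the verification that the stabilized images $Z_k$ are genuinely compatible under \emph{all} the transition maps of the original system (not just along the chosen chain) — this requires invoking directedness of $I$ to interpolate, and checking that the left-translate-of-admissible-subgroup property is preserved under the composite maps, which is where Lemma~\ref{l:lemma-artinian-1} does the real work.
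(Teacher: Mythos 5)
Your proof is correct: the reduction to a cofinal chain, the passage to eventual images $Y_k^{(m)}$, the stabilization via Lemma~\ref{l:lemma-artinian-1}, and the resulting surjective tower of nonempty sets together give a complete argument. The paper itself does not spell out a proof here but defers to \cite[Lemma~9.15]{phung-2020}, whose argument is exactly this Mittag--Leffler-type reasoning, so your approach is essentially the same as the one the paper relies on (your only slip is a cosmetic one in the closing sentence, where you credit Lemma~\ref{l:lemma-artinian-1} with preserving the translate-of-admissible-subgroup property under composites, whereas its actual role is the stabilization of the descending chains).
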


\begin{proof} 
See \cite[Lemma~9.15]{phung-2020}. 
\end{proof}

\subsection{The case of infinite cyclic extension} 
\label{s:main-infinite-cyclic}
We will now prove an extension  
of \cite[Theorem~7.2]{phung-2020} and \cite[Lemma~4.4]{schmidt-book}. 
The proof follows closely the steps of the proof of \cite[Theorem~7.2]{phung-2020} with some minor but not straightforward modifications.  
Hence, for the convenience of the readers and for sake of completeness, we include the details below.   

\begin{theorem} 
\label{t:ad-group-Z-d} 
Let  $H$ be a countable monoid of admissible Markov type. 
Then $ H \times \N$ is also a monoid of admissible Markov type. 
\end{theorem}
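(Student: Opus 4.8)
The plan is to prove that if $H$ is a countable monoid of admissible Markov type, then so is $H \times \N$, by showing that every admissible group subshift $\Sigma \subset A^{H\times\N}$ is of finite type. The first move is to exploit the submonoid structure: let $e$ denote the generator of the $\N$-factor, so that $H$ sits inside $H\times\N$ as $H\times\{0\}$, and for each $n\geq 0$ set $H_n = H \times \{0,1,\dots,n\}$ and $L_n = H\times\{n\}$. Using Lemma~\ref{l:restriction-sft} and the bijection $A^{H\times\{0,\dots,n\}} = B_n^{H}$ with $B_n = A^{\{0,\dots,n\}}$, I would transfer the problem to the monoid $H$: since $H$ is of admissible Markov type and (by the lemma giving $\tau_F^+$ admissible, together with Theorem~\ref{t:artinian-general-sft}) the restrictions $\Sigma_{H_n}$ are admissible group subshifts of $B_n^H$, each $\Sigma_{H_n}$ is a subshift of finite type over $H$, hence defined by a finite window $D_n \subset H$ and an admissible subgroup $P_n \subset A^{D_n\times\{0,\dots,n\}}$.

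Next I would set up the inverse system. The shift along the $\N$-direction gives maps that relate the ``slices'' $\Sigma$ restricted to $H\times\{n, n+1, \dots\}$, and one wants to show that only finitely many of the windows $D_n$ are actually needed — that is, the tower of constraints stabilizes. This is where Theorem~\ref{t:descending-ad-sft} (descending sequences of admissible group subshifts of $V^H$ stabilize, $V$ being the Artinian structure on a suitable $B_n$) and the two technical lemmata, Lemma~\ref{l:lemma-artinian-1} and Lemma~\ref{l:inverse-limit-ad-grp}, enter. Concretely: consider the admissible subgroups $Q_n \subset A^{H\times\{0,1\}}$ obtained by reading off, from $\Sigma_{H_{n+1}}$, which pairs of adjacent slices can occur after the first slice has been ``filled in'' consistently by everything above level $n+1$; these form a descending chain of admissible group subshifts over $H$ once transported correctly, so by Theorem~\ref{t:descending-ad-sft} they stabilize, say from level $N$ onward. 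The non-emptiness Lemma~\ref{l:inverse-limit-ad-grp} is then used to verify that the stabilized transition constraint is genuinely achievable — i.e., any configuration satisfying the level-$N$ window can be extended to an honest point of $\Sigma$ — so that $\Sigma = \Sigma(A^{H\times\N}; D_N\times\{0,\dots,N\}, P_N)$, a subshift of finite type.

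The main obstacle I anticipate is the ``realizability'' step: knowing that the finite window $D_N\times\{0,\dots,N\}$ captures all the constraints is not the same as knowing that every locally admissible configuration extends globally. One must run an inductive extension along $\N$, at each stage choosing the next slice compatibly with all previously chosen slices; the set of valid choices at stage $n$ is (a translate of) an admissible subgroup, and the descending chain of ``prefixes extendable arbitrarily far'' stabilizes by Lemma~\ref{l:lemma-artinian-1}, while Lemma~\ref{l:inverse-limit-ad-grp} guarantees the inverse limit over $n$ is nonempty, producing the desired global point. Care is needed because $H$ is only a monoid, not a group, so the shift maps in the $H$-direction need not be bijective and one cannot freely invert; the argument must therefore be phrased entirely in terms of restriction maps and the window-based description \eqref{e:sft}, following the pattern of \cite[Theorem~7.2]{phung-2020} but keeping track of the monoid (rather than group) hypothesis throughout, as the excerpt warns. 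Once the infinite-cyclic case is in hand, Theorem~\ref{t:main-N-r-sft} will follow by an obvious induction on $r$ combined with the trivial fact that finite monoids are of admissible Markov type.
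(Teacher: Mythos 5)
Your overall architecture does match the paper's: reduce to the monoid $H$ via the block recoding of Lemma~\ref{l:restriction-sft}, stabilize a descending chain of admissible group subshifts of $V^H$ by Theorem~\ref{t:descending-ad-sft}, and use Lemma~\ref{l:inverse-limit-ad-grp} for realizability. But the descending chain you actually propose does not work. The sets $Q_n\subset A^{H\times\{0,1\}}$ of ``pairs of adjacent slices that can occur'' after consistency with levels up to $n+1$ are, as described, nothing but the restriction $\Sigma_{H\times\{0,1\}}$ for every $n$ (every element of $\Sigma_{H_{n+1}}$ is by definition the restriction of a global point of $\Sigma$), so the chain is constant and carries no information; at best the definition is too vague to verify that it is a descending chain of admissible group subshifts over $H$. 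The paper stabilizes a genuinely different and essentially group-theoretic object: $X_n$, the set of slice-$n$ restrictions of configurations of $\Sigma$ that are identically $\varepsilon$ on $H\times\{0,\dots,n-1\}$ (a ``kernel'' of the projection forgetting the top slice), transported to $V^H$ via $\Phi_{H,n}$. Establishing that each $\Phi_{H,n}(X_n)$ is a closed admissible group subshift already requires nontrivial work (Lemmas~\ref{claim:1} and~\ref{claim:2}, both resting on Lemma~\ref{l:inverse-limit-ad-grp}).

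The realizability step, which you rightly flag as the main obstacle, is precisely where your sketch is missing the key idea. The paper shows $\Sigma(V^G;\Omega,\Sigma_\Omega)\subset\Sigma$ with $\Omega=H\times G_N$ by a multiplicative correction: given $y$ locally admissible, pick $z_0,z_1\in\Sigma$ agreeing with $y$ on $\Omega$ and on its translate $\Omega(1)$, form $z=z_0(z_1)^{-1}\in\Sigma$, which vanishes on $H\times\{1,\dots,N\}$, read off $v\coloneqq z\vert_{H\times\{N+1\}}$, which lies in (the preimage of) the stabilized $X$, and replace $z_0$ by $c^{-1}z_0$ for some $c$ in the nonempty set $L_{v,N+1}$ to extend the agreement with $y$ one level higher; induction plus closedness of $\Sigma$ finishes. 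Your assertion that ``the set of valid choices at stage $n$ is a translate of an admissible subgroup'' is exactly the conclusion of this computation, but without the quotient-and-correct mechanism there is no argument that stabilization of any chain implies that every locally admissible configuration extends globally. A smaller point: you do not need the finite windows $D_n$ of the individual $\Sigma_{H_n}$ to stabilize (they need not); the Markov-type hypothesis on $H$ is invoked only twice, once for the stabilization of $(\Phi_{H,n}(X_n))_n$ and once at the very end to give a single finite window $D\subset H$ for $\Sigma_\Omega$.
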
 

\begin{proof} 
Let $G \coloneqq H \times \N$. 
Let $V$ be an admissible Artinian group structure. 
Let $\Sigma \subset V^G$ be an admissible group subshift. 
We must show that $\Sigma$ is an admissible group subshift of finite type 
of  $V^G$. 
 \par 
Let $0_H = 0_G$ be the neutral element of the monoids $H$ and $G$ whose laws are denoted 
additively just as $\N$. The group laws on $V$ is written multiplicatively 
and $\varepsilon$ stands for the neutral element of $V$.  
 \par 
Since $H$ is countable, we can find an increasing sequence $(F_n)_{n \geq 1}$ 
of finite subsets of $H$ such that $0_H \in F_1$ and $H = \bigcup_{n \geq 1} F_n$. 
\par 
For every integer $n \geq 1$, let us denote $G_n \coloneqq  \{0, \dots, n \} \subset \N$.  
We define   
\begin{align}
\label{e:def-x-n}
X_n  \coloneqq \{x\vert_{H \times \{n\} } \colon x \in \Sigma , \, x\vert_{H \times G_{n-1}} = \varepsilon^{H \times G_{n-1}}\} 
\subset   V^{H \times \{n\}}.  
\end{align}
For every $n \in \N$, there is a canonical bijection 
$\Phi_{H, n} \colon V^{H \times \{ n \}}  \to V^H$ induced by $h \mapsto (h,n)$. 
Since $\Sigma$ is $G$-invariant, we deduce easily that $\Phi_{H, n+1} (X_{n+1} ) \subset \Phi_{H, n} (X_n)$ and 
$\Phi_{H, n}  (X_n)$ is $H$-invariant for all $n \geq 1$. 
\par 
 
\begin{lemma} 
\label{claim:1}
Let $E \subset H$ be a finite subset. Then $(X_n)_{E \times \{n\}}$  
is an admissible subgroup of $V^{E \times \{n \}}$ for every $n \geq 1$. 
\end{lemma} 
\begin{proof} 
Fix $n \geq 1$. Since $E$ is finite, there exists an integer $k_0 \geq n$ such that $E \subset F_{k_0}$.  
Consider the following subset $Y \subset \Sigma_{E \times \{ n \}}$ defined by:  
\begin{align}
\label{e:y-claim-1}
Y \coloneqq \bigcap_{k \geq k_0}  Y_k , \quad \mbox{where} \quad 
Y_k \coloneqq  \{ x\vert_{E \times \{n \} } \colon x \in \Sigma, \, x\vert_{ F_k \times G_{n-1}} = \varepsilon^{F_k \times G_{n-1}}  \}.  
\end{align}
\par 
Let $k \geq k_0$ be an integer. Since $\Sigma$ is an algebraic group subshift, 
$\Sigma_{ F_k \times G_n}$ is an algebraic subgroup of $V^{F_k \times G_n}$. 
Let $\pi^* \colon \Sigma_{F_k \times G_n} \to \Sigma_{F_k \times G_{n-1}}$ 
and $\pi_E \colon \Sigma_{F_k \times G_n} \to \Sigma_{E \times \{ n \}}$  
be the canonical homomorphisms induced respectively by the inclusions 
$F_k \times G_{n-1} \subset F_k \times G_n$ and $E \times \{n\} \subset F_k \times G_n$  
(cf.~also~\cite[Lemma~4.7]{phung-2020}).   
\par 
Then  $Y_k = \pi_E(\Ker(\pi^*))$ is an admissible subgroup of $\Sigma_{E \times \{n\} }$ 
and thus of $V^{E \times \{n\}}$.  
Thus the descending chain condition of $V^{E \times \{n\} }$ implies that 
the descending sequence of admissible subgroups $(Y_k)_{k \geq k_0}$ 
of $V^{E \times \{n\}}$ stabilizes. 
It follows that $Y$ is an admissible subgroup of $V^{E \times \{n\} }$.  
\par 
Next, we will show  that  $(X_n)_{E \times \{n\} }= Y$. 
 The inclusion $(X_n)_{E \times \{n\} } \subset Y$  
 is immediate. For the converse inclusion, let 
 $y \in Y$. We must show that there exists $x \in X_n$ such that $x\vert_{E \times \{n\} } =  y$.  
For every $k \geq k_0$,   consider the following subset of $\Sigma_{F_k \times G_k}$:   
\begin{align}
\label{e:y-k-y}
Y_k(y) \coloneqq  \{ x\vert_{F_k \times G_k} \colon x \in \Sigma, 
\, x\vert_{E \times \{n \}} =   y, \, x\vert_{F_k  \times G_{n-1}} = \varepsilon^{F_k \times G_{n-1}} \}. 
\end{align} 
\par 
Since $y \in Y = \bigcap_{k \geq k_0}  Y_k$, we can find for every $k \geq k_0$  
a configuration $x_k \in \Sigma$ such that $y= x_k \vert_{E \times \{n\} }$ and 
$x_k\vert_{F_k \times G_{n-1}} = \varepsilon^{F_k \times G_{n-1}}$. 
This shows that $x_k\vert_{F_k \times G_k} \in Y_k(y)$ for every $k \geq k_0$. 
\par 
For every   $k \geq k_0$, we have canonical homomorphisms  
$\psi_k^* \colon \Sigma_{F_k\times G_k} \to \Sigma_{F_k \times G_{n-1}}$ 
and $\phi_k \colon \Sigma_{F_k\times G_k} \to \Sigma_{E \times \{n\} }$ 
induced by the corresponding inclusions of sets in the indices.  
A direct verification shows that 
\begin{align} 
\label{e:y-k-y-translate} 
Y_k(y) =  x_k\vert_{F_k \times G_k}  ( \Ker (\psi_k^*) \cap \Ker(\phi_k)).
\end{align}
\par 
Thus $Y_k(y)$ is a translate by $ x_k\vert_{F_k \times G_k} $ of the admissible subgroup 
$\Ker (\psi_k^*) \cap \Ker(\phi_k)$ of $\Sigma_{F_k\times G_k}$ for every $k \geq k_0$.  
\par 
Remark that $n$ is fixed so that the sequence $(Y_k(y))_{k \geq k_0}$ 
forms an inverse system of nonempty sets. The  
transition maps $Y_m(y) \to Y_k(y)$, 
where $m \geq k \geq k_0$, are the restrictions of the canonical homomorphisms  
$\Sigma_{F_m\times G_m} \to \Sigma_{F_k\times G_k}$ induced by the inclusions $F_k \times G_k \subset F_m\times G_m$. 
\par 
Since $Y_m(y)$ is a translate of an admissible subgroup of $\Sigma_{F_m \times G_m}$, 
the transition maps $Y_m(y) \to Y_k(y)$ have admissible images for all $m \geq k \geq k_0$. 
\par 
 Lemma~\ref{l:inverse-limit-ad-grp} then implies that there exists 
$x \in \varprojlim_{k \geq k_0} Y_k(y)$. 
By the construction of the sets $Y_k(y)$, we have for every $k \geq k_0$ that 
\[
x\vert_{E \times \{n\} } = y, \quad \mbox {and } \quad  
x\vert_{F_k \times G_{n-1}} = \varepsilon^{F_k \times G_{n-1}}. 
\]  
\par 
Since $H = \bigcup_{k \geq k_0} F_k$, 
we deduce that 
$x\vert_{H \times G_{n-1}} = \varepsilon^{H \times G_{n-1}}$.  
\par 
Note that $ \varprojlim_{k \geq k_0} Y_k(y) \subset \varprojlim_{k \geq n} \Sigma_{\Phi(F_k\times G_k)}$ 
since $Y_k(y) \subset \Sigma_{\Phi(F_k\times _k)}$ for every $k \geq k_0$.  
By the closedness of $\Sigma$ in $V^G$ with respect to the prodiscrete topology 
and as $G = \bigcup_{k \geq n} \Phi(F_k\times G_k)$, 
we have $\varprojlim_{k \geq n} \Sigma_{\Phi(F_k\times G_k)} = \Sigma$.   
It follows that $x \in \Sigma$. 
\par 
We deduce that  $x \in X_n$ by definition of $X_n$ (cf.~\eqref{e:def-x-n})). 
Since $x\vert_{E \times \{n\}} = y$ as well, we have $Y \subset (X_n)_{E \times \{n\}}$.  
\par 
We conclude that  $(X_n)_{E \times \{n\}} = Y$ is an admissible subgroup 
of $V^{E \times \{n\}}$ and thus Lemma~\ref{claim:1} is proved. 
\end{proof}

\begin{lemma}
\label{claim:2}
For every integer $n \geq 1$, $\Phi_{H, n}(X_n)$ is an $H$-invariant closed subset of $V^H$ 
with respect to the prodiscrete topology.  
\end{lemma} 
\begin{proof}
Let us fix $n \geq 1$.  
For every $k \geq n \geq 1$, define  
 \[
X_{nk} \coloneqq \{x\vert_{F_k \times G_k} \colon x \in \Sigma , \, x\vert_{F_k \times G_{n-1}} = \varepsilon^{F_k \times G_{n-1}}\} 
\subset \Sigma_{F_k \times G_k}
\]
which  is exactly the kernel of 
 the admissible homomorphism of admissible subgroups $\Sigma_{F_k \times G_k} \to \Sigma_{F_k \times G_{n-1}}$. 
It follows that $X_{nk}$ is  an admissible subgroup  of $\Sigma_{F_k \times G_k}$ and thus of $V^{F_k \times G_k}$.  
\par
For $m \geq k \geq n$, the inclusion $F_k \times G_k \subset F_m \times G_m$ 
induces a projection $\pi_{km} \colon V^{F_m \times G_m} \subset V^{F_k\times G_k}$. 
If $x \in V^{F_m \times G_m}$ satisfies $x\vert_{F_m \times G_{n-1}} = \varepsilon^{F_m \times G_{n-1}}$ 
then clearly  $\pi_{km}(x)\vert_{  F_k \times G_{n-1}} = \varepsilon^{F_k \times G_{n-1}}$. 
Hence, the restriction of $\pi_{km}$ to $X_{nm}$ defines a homomorphism of admissible groups 
$p_{km} \colon X_{nm} \to X_{nk}$. 
\par 
We thus obtain an inverse system $(X_{nk})_{k \geq n}$ whose transition maps $p_{km}$ 
are homomorphisms of admissible groups for $m \geq k \geq n$.   
\par 
Now suppose that $z \in V^{H \times \{n\}}$ belongs to the closure of $X_n$ in $V^{H \times \{n\}}$ with respect to the 
prodiscrete topology. 
Hence, by definition of $X_n$, there exists for each $k \geq n$ 
a configuration   
$y_k \in \Sigma$ such that $ y_k \vert_{F_k \times \{n\}} = z\vert_{F_k\times \{n\}}$ 
and that $y_k \vert_{H \times G_{n-1}} = \varepsilon^{H \times G_{n-1}}$. 
\par 
For every $k \geq n$, consider the following subset of $\Sigma_{F_k\times G_k}$: 
 \begin{equation}
 \label{e:x-n-k-1}
X_{nk}(z) \coloneqq \{ x\vert_{F_k\times G_k} \colon 
 x \in \Sigma , \, x\vert_{F_k \times G_{n-1}} = \varepsilon^{F_k \times G_{n-1}}, \, 
 x\vert_{F_k \times\{n\}} = z \vert_{F_k \times \{n\}}  \}. 
\end{equation}
\par 
Observe that $y_k\vert_{ F_k\times G_k} \in X_{nk}(z)$ for every $k \geq n$. 
As in \eqref{e:y-k-y-translate}, we find that $X_{nk}(z)$ is a translate of an admissible subgroup of $\Sigma_{F_k\times G_k}$. 
\par 
Then by Lemma~\ref{l:inverse-limit-ad-grp},  
there exists $x \in \varprojlim_{k \geq n} X_{nk}(z) \subset \varprojlim_{k \geq n} \Sigma_{F_k\times G_k} = \Sigma$. 
We find that 
$x\vert_{F_k \times G_{n-1}} = \varepsilon^{F_k \times G_{n-1}}$ 
and that $x\vert_{F_k \times \{n\}} = z \vert_{F_k \times \{n\}}$ for every $k \geq n$. 
Thus, by letting $k \to \infty$, we obtain 
$x\vert_{H \times G_{n-1}} = \varepsilon^{H \times G_{n-1}}$ and 
$x\vert_{H \times \{n\}} = z$. 
Hence, $z  \in X_n$ and this proves that $\Phi_{H,n}(X_n)$ is closed in $V^H$ 
with respect to the prodiscrete topology. It is trivial that $\Phi_{H,n}(X_n)$ is an $H$-invariant subset 
of $V^H$. Lemma~\ref{claim:2} is thus proved. 
 \end{proof}

By Lemma~\ref{claim:1},  Lemma~\ref{claim:2} and the remarks  
after the definition \eqref{e:def-x-n} of $X_n$, we deduce the following:   

\begin{lemma} 
\label{l:lemma-1}
The sequence $(\Phi_{H,n}(X_n))_{n \geq 1}$ is a descending sequence of admissible group subshifts of $V^H$. 
\qed
\end{lemma}  
 Now since $H$ is of admissible Markov type, 
Lemma~\ref{l:lemma-1} and Theorem~\ref{t:descending-ad-sft} imply that  $(\Phi_{H,n}(X_n))_{n \geq 1}$ 
must stabilize and consist of admissible group subshifts of finite type of $V^H$. 
Hence, there exists $N \geq 1$ such that  
\begin{equation} 
\label{e:proof-main-X} 
\Phi_{H,n}(X_n) = \Phi_{H,n}(X_N) \eqqcolon X \quad \mbox{for every $n \geq N$}.  
\end{equation}
\par 
  For every   $n \geq 1$ and $v \in X$, consider the following subset  of $\Sigma$  
\[
L_{v, n} \coloneqq  \{ x \in \Sigma \colon x\vert_{H \times G_{n-1}} = \varepsilon^{H \times  G_{n-1}}, \, 
x\vert_{H \times \{n\}} = \Phi_{H, n}^{-1} (v) \}. 
\]
The relation \eqref{e:proof-main-X} implies that $L_{v,n}$ is nonempty for every $n \geq 1$ and $v \in X$. 
\par 
 Let $\Omega \coloneqq  H \times G_N \subset G$ and 
let $\Omega(1) \coloneqq H \times \{ 1, \cdots, N +1\}$ be the translate of $\Omega$ 
by $(0_H, 1)$. 
Consider the subshift 
$\Sigma' \coloneqq  \Sigma (V^G; \Omega, \Sigma_\Omega)$ of $V^G$ (see Definition \eqref{e:sft}). 
 It is clear that $\Sigma \subset \Sigma'$. 
We are going to prove the converse inclusion.  
\par 
 Let $y \in \Sigma'$ be a configuration.  
  Then by definition of $\Sigma'$, there exists $z_0 , z_1 \in \Sigma$ 
such that $(z_0)\vert_\Omega = y\vert_\Omega$ 
and  $(z_1)\vert_{\Omega(1)} = y\vert_{\Omega(1)}$. 
It follows that for $z = z_0(z_1)^{-1} \in \Sigma$, we have 
$z\vert_{H \times \{1, \cdots, N \}} = \varepsilon^{H \times \{ 1, \cdots, N \}}$. 
\par 
Therefore, $v \coloneqq z\vert_{H \times \{N+1\}} \in \Phi_{H,N+1}^{-1} (X)$. 
\par 
 Let $c \in L_{v,N+1}$. 
Then the configuration $x \coloneqq c^{-1}z_0 \in \Sigma$ satisfies 
\[
x\vert_{  H \times G_{N+1}} = y\vert_{ H \times G_{N+1}}. 
\]  
An immediate induction on $m \geq 1$ by a similar argument shows that 
there exists a sequence $(x_m)_{m \geq 1} \subset \Sigma$ such that 
$x_m\vert_{H \times G_m} = y\vert_{H \times G_m}$ for every $m \geq 1$. 
\par 
Remark that any given finite subset of $G$ is contained in 
some translate of the sets $H  \times G_m$ for some $m \geq 1$. 
Consequently, the above paragraph shows  that $y$ belongs to the closure of $\Sigma$ in $V^G$ 
with respect to the prodiscrete topology.  
As $\Sigma$ is closed in $V^G$, it follows that $y \in \Sigma$. 
Therefore, $\Sigma' \subset \Sigma$ and we conclude that $\Sigma = \Sigma' \subset V^G$.  
\par 
We regard $\Sigma_\Omega$ as a subshift of $U^H$ with respect to the right shift action given by the monoid $H$ 
with the alphabet 
$U \coloneqq V^{\{0_H\} \times G_N}$ which is an admissible Artinian group structure. 
\par 
As $\Sigma$ is closed in $V^G$ with respect to the prodiscrete topology, thus 
\cite[Lemma~3.1]{phung-2020} (which holds for admissible group subshifts over countable monoid universes by a similar proof) 
implies  
that $\Sigma_\Omega$ is closed in $U^H$. 
On the other hand, $(\Sigma_\Omega)_{ E \times G_N} = \Sigma_{E \times G_N}$ is an admissible subgroup of $U^E$ for every finite subset $E \subset H$. 
Hence, $\Sigma_\Omega$ is an admissible group subshift of $U^H$. 
\par 
Since $H$ is a monoid of admissible Markov type, $\Sigma_\Omega$ is an admissible group subshift of finite type of $U^H$. 
Thus,  there exists a finite subset $D \subset H$ such that 
$\Sigma_\Omega = \Sigma(U^H; D, P)$ 
where $P \coloneqq (\Sigma_\Omega)_{D \times G_N} = \Sigma_{D \times G_N}$ 
is  an admissible subgroup of $U^D=V^{D \times I_N}$.  
It follows that  
\begin{align*}
\Sigma & = \Sigma'= \Sigma(V^G; \Omega, \Sigma_\Omega) &&\\
& = \Sigma(V^G;  H \times G_N, \Sigma(U^H; D, P)) &&\\
& = \Sigma(V^G;  D \times G_N, P) &&\mbox{(by Lemma~\ref{l:restriction-sft})}.
\end{align*} 
Since $D \times G_N$ is finite and $P$ is an admissible subgroup of $V^{D \times G_N}$, 
we conclude that $\Sigma$ is an admissible group subshift of finite type of $V^G$.  
\par 
The proof  of Theorem~\ref{t:ad-group-Z-d} is complete. 
\end{proof}

\subsection{The case of extension by finite groups} 
\label{s:main-finite-cyclic} 

The following proposition is a direct application of Lemma~\ref{l:restriction-sft}.  

\begin{proposition}
\label{p:main-by-finite} 
Let  $H$ be a countable monoid of admissible Markov type 
and let $F$ be a finite monoid. 
Then $ H \times F$ is also a monoid of admissible Markov type. 
 \end{proposition}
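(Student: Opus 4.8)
The plan is to recognise an admissible group subshift of $A^{H\times F}$ as an admissible group subshift over the universe $H$ with the finite alphabet $A^{F}$, to invoke that $H$ is of admissible Markov type, and then to transport the resulting finite defining window back to $H\times F$ by means of Lemma~\ref{l:restriction-sft}; in this sense the proposition really is a direct application of that lemma.

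So fix an admissible Artinian group structure $A$ and an admissible group subshift $\Sigma\subset A^{G}$, where $G\coloneqq H\times F$, and aim to show $\Sigma$ is of finite type. I would set $H'\coloneqq H\times\{1_F\}$, a submonoid of $G$ isomorphic to $H$ (hence also of admissible Markov type), and $E\coloneqq\{1_H\}\times F\subset G$, which is finite because $F$ is. Since $H'(1_H,f)=H\times\{f\}$, the left translates $H'k$ are pairwise distinct for $k\in E$, so Lemma~\ref{l:restriction-sft} applies with this $H'$ and $E$; note moreover that $H'E=G$ and that, for $D=D_{0}\times\{1_F\}\subset H'$, one has $DE=D_{0}\times F$, which is finite whenever $D_{0}$ is (this is the one place where finiteness of $F$ is essential). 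Then I would put $B\coloneqq A^{E}$, which is an admissible Artinian group structure since $E$ is finite, and let $\Sigma'\subset B^{H'}$ be the image of $\Sigma$ under the canonical bijection $B^{H'}=A^{H'E}=A^{G}$.

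The heart of the argument is to check that $\Sigma'$ is an admissible group subshift of $B^{H'}$. A short computation shows that the canonical bijection $B^{H'}=A^{G}$ is a homeomorphism for the prodiscrete topologies and intertwines the right shift of $H'$ on $B^{H'}$ with the right shift of $H'\subset G$ on $A^{G}$ (the identity $hkh'=(hh')k$ holds in $G$ for all $h,h'\in H'$ and $k\in E$); hence $\Sigma'$ is closed and $H'$-invariant because $\Sigma$ is closed and $G$-invariant, the closedness passing over exactly as in the final step of the proof of Theorem~\ref{t:ad-group-Z-d} (via \cite[Lemma~3.1]{phung-2020}). Next, for each finite $W\subset H'$ the reindexing bijection identifies $B^{W}$ with $A^{WE}$ and carries $(\Sigma')_{W}$ onto $\Sigma_{WE}$; since $WE$ is finite, $\Sigma_{WE}$ is an admissible subgroup of $A^{WE}$, and as both $B^{W}$ and $A^{WE}$ carry the admissible Artinian structure of $A^{\{1,\dots,|W|\,|E|\}}$, it follows that $(\Sigma')_{W}$ is an admissible subgroup of $B^{W}$. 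Thus $\Sigma'$ is an admissible group subshift of $B^{H'}$, and since $H'$ is of admissible Markov type there are a finite $D\subset H'$ and an admissible subgroup $P\subset B^{D}$ with $\Sigma'=\Sigma(B^{H'};D,P)$.

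Finally, Lemma~\ref{l:restriction-sft} closes the loop: as $\Sigma$ is a $G$-invariant subset of $A^{G}$, we have $\Sigma=\Sigma(A^{G};G,\Sigma)=\Sigma(A^{G};H'E,\Sigma(B^{H'};D,P))$, and the lemma rewrites the right-hand side as $\Sigma=\Sigma(A^{G};DE,P)$. Since $DE$ is finite and $P$ is an admissible subgroup of $A^{DE}$, this exhibits $\Sigma$ as an admissible group subshift of finite type of $A^{G}$, so $H\times F$ is of admissible Markov type. I expect the only genuine obstacle to be the middle paragraph: checking that passing from the universe $H\times F$ with alphabet $A$ to the universe $H$ with alphabet $A^{F}$ preserves closedness, shift-invariance, and, crucially, the admissible Artinian group structure on every finite-window restriction; this, however, is routine bookkeeping of the same kind already carried out at the end of the proof of Theorem~\ref{t:ad-group-Z-d}.
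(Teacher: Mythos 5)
Your proposal is correct and follows exactly the route the paper intends: it recasts an admissible group subshift of $A^{H\times F}$ as one of $(A^{F})^{H}$, applies the admissible Markov type hypothesis on $H$, and transports the finite defining window back via Lemma~\ref{l:restriction-sft}, which is precisely the argument the paper delegates to \cite[Proposition~7.6]{phung-2020}. The only difference is that you have written out the bookkeeping (shift-intertwining, closedness, admissibility of finite restrictions) that the paper leaves implicit, and these checks are all carried out correctly.
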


\begin{proof}
Using Lemma~\ref{l:restriction-sft}, the proof of the proposition is identical, \emph{mutatis mutandis}, to the proof of \cite[Proposition~7.6]{phung-2020}.  
\end{proof}

\begin{proof}[Proof of Theorem~\ref{t:main-N-r-sft}]
It is a direct consequence of Theorem~\ref{t:ad-group-Z-d} and Proposition~\ref{p:main-by-finite}.  
\end{proof}

\section{Column factorizations} 
\label{s:kurka-construction} 
We generalize the   useful construction of \emph{column factorizations} of Kurka \cite{kurka-97} as follows (see also the similar notion of  \emph{canonical factor} of Blanchard-Maass \cite{blanchard-maass-97}). 
Let $G$ be a countable monoid and let $A$ be a set. 
Let $\Sigma$ be a subshift of $A^G$ and let 
$\tau_1, \cdots, \tau_r \colon \Sigma \to \Sigma$ be cellular automata (in that order).   
For every  $\alpha  = (a_1, \cdots, a_r)  \in \N^r$,  we  denote 
\begin{equation} 
\label{e:tau-alpha}
\tau_\alpha \coloneqq \tau_1^{a_1} \circ   \cdots \circ    \tau_r^{a_r}.
\end{equation}
\par 
Suppose that the cellular automata $\tau_1, \cdots, \tau_r$ 
are pairwise commuting.  Hence, we can simply write $\tau_\alpha = \tau_1^{a_1}   \cdots    \tau_r^{a_r}$ 
for the composition $\tau_1^{a_1} \circ   \cdots \circ    \tau_r^{a_r}$. 
\par 
Let $E \subset G$ be a finite subset. 
We define a map $\Psi_E \colon \Sigma \to  (\Sigma_E)^{\N^r} $ as follows.  
For every $x \in \Sigma$ and $ \alpha \in \N^r$, we set:  
\begin{equation} 
\label{e:psi-e}
\Psi_E(x) (\alpha)  \coloneqq  \left( \tau_\alpha (x) \right)\vert_E. 
\end{equation}
\par 
\begin{definition} 
With the above notations, the subset  
\begin{equation}
\label{e:column-facto}
\Lambda(\Sigma, E ;  \tau_1, \cdots, \tau_r) \coloneqq \Psi_E(\Sigma) \subset (\Sigma_E)^{\N^r}
\end{equation}
is called the \emph{column factorization} associated with $\Sigma$, $E$ and $\tau_1, \cdots, \tau_r$.  
\end{definition}

We have the following crucial property of column factorizations of an admissible group subshift.

\begin{theorem} 
\label{l:closed-kurka-sft}
Let $G$ be a countable monoid and let $A$ be an admissible Artinian group structure. 
Let  $\Sigma \subset A^G$ be an admissible   group subshift  
and let $\tau_1, \cdots, \tau_r \in \End_{G\text{-grp}} (\Sigma)$ be pairwise commuting. 
Let $E \subset G$ be a finite subset.  
Then $\Lambda(\Sigma, E ;  \tau_1, \cdots, \tau_r) \subset (\Sigma_E)^{\N^r}$  is  a subshift of finite type.  
\end{theorem}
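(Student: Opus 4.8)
The plan is to realize the column factorization as the image of an admissible group cellular automaton between admissible group subshifts over the monoid $\N^r$, and then to invoke the finiteness machinery developed earlier. First I would observe that $\Lambda \coloneqq \Lambda(\Sigma, E; \tau_1, \dots, \tau_r)$ sits inside $(\Sigma_E)^{\N^r}$, where $B \coloneqq \Sigma_E$ is an admissible subgroup of $A^E$ by the definition of an admissible group subshift, hence is itself an admissible Artinian group structure (as a finite power of $A$ cut down to an admissible subgroup — more precisely I would work inside $(A^E)^{\N^r}$ with its admissible structure and note $\Lambda \subset (A^E)^{\N^r}$). The key point is that the map $\Psi_E \colon \Sigma \to (A^E)^{\N^r}$ of \eqref{e:psi-e} is "$\N^r$-equivariant" and compatible with the admissible structure: translation by $\beta \in \N^r$ on the target corresponds, via $\Psi_E(\tau_\beta(x))(\alpha) = (\tau_{\alpha+\beta}(x))\vert_E$, to applying $\tau_\beta$ on the source, so $\Psi_E(\tau_\beta(x)) = \beta \star \Psi_E(x)$.

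Next I would identify $\Lambda$ as $\Psi_E(\Sigma)$ where $\Sigma$, viewed suitably, is an admissible group subshift over $\N^r$. The natural route: form the subshift $\widehat\Sigma \subset (A^E)^{G \times \N^r}$, or better, directly set up an admissible group cellular automaton $\Theta \colon \Sigma' \to (A^E)^{\N^r}$ whose image is $\Lambda$, where $\Sigma'$ is an admissible group subshift over $\N^r$ built from $\Sigma$ by remembering, at each $\alpha \in \N^r$, the restriction to a large enough finite window of $G$ that contains the memory sets needed to recover all the $\tau_\alpha$-values on $E$ from the $\alpha = 0$ data. Concretely, because each $\tau_i$ has a finite memory set $M_i$, the value $\tau_\alpha(x)\vert_E$ depends only on $x\vert_{W_\alpha}$ for a finite $W_\alpha \subset G$, and the local rule expressing $\Psi_E(x)(\alpha + e_i)$ in terms of $\Psi_E(x)$ near $\alpha$ (using commutativity) is an admissible homomorphism by Lemma~\ref{l:ad-mor} and the lemma on $\tau_F^+$. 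This exhibits $\Lambda$ — or a subshift admissibly isomorphic to it — as the image under an admissible group cellular automaton of an admissible group subshift over $\N^r$, hence by Theorem~\ref{t:artinian-general-sft}(ii) as an admissible group subshift of $(A^E)^{\N^r}$.

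Finally I would apply Theorem~\ref{t:main-N-r-sft}: since $\N^r = \N^r \times \{1\}$ is a monoid of admissible Markov type, every admissible group subshift of $(A^E)^{\N^r}$ is of finite type. Therefore $\Lambda(\Sigma, E; \tau_1, \dots, \tau_r)$ is a subshift of finite type, as claimed. I expect the main obstacle to be the bookkeeping in the second step: one must choose the finite window in $G$ carefully so that the factor map $\Theta$ is genuinely a cellular automaton over $\N^r$ (its local rule must be expressible with a finite memory set in $\N^r$), and one must verify that $\Theta$ is an admissible homomorphism on each finite window — this is where commutativity of the $\tau_i$ and the admissibility lemmas for graphs (Lemma~\ref{l:ad-mor} and the $\tau_F^+$ lemma) do the real work. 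Once $\Lambda$ is recognized as an admissible group subshift over $\N^r$, the conclusion is immediate from Theorem~\ref{t:main-N-r-sft}.
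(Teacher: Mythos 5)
There is a genuine gap in your reduction. You want to exhibit $\Lambda = \Psi_E(\Sigma)$ as the image of an admissible group cellular automaton over $\N^r$ and then invoke Theorem~\ref{t:artinian-general-sft}(ii). But $\Psi_E$ is not a cellular automaton over $\N^r$: its source $\Sigma$ is a subshift over $G$, and Theorem~\ref{t:artinian-general-sft}(ii) only applies to a cellular automaton between subshifts over the \emph{same} monoid. Your proposed fix --- replacing $\Sigma$ by a subshift $\Sigma'\subset (A^W)^{\N^r}$ that records $\tau_\alpha(x)\vert_W$ at each $\alpha$ --- is circular: $\Sigma'$ is itself a column factorization (with $E$ replaced by $W$), and establishing that \emph{it} is an admissible group subshift of $(A^W)^{\N^r}$ is exactly the statement you are trying to prove. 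There is no finite window in $G$ from which all the data $(\tau_\alpha(x)\vert_E)_{\alpha\in\N^r}$ can be recovered, so no choice of $W$ breaks the circularity.

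The second, related omission is closedness. An admissible group subshift must be closed in the prodiscrete topology (Definition~\ref{d:admissible-subgroup-shift}), and since $A$ is not assumed finite or compact, the image $\Psi_E(\Sigma)$ has no reason to be closed for free. The paper proves this directly: given $z$ in the closure of $\Lambda$, it builds a nonempty inverse system $(X_n(z))_{n\geq 1}$ of left translates of admissible subgroups of $\Sigma_{M_{F_n}}$ and applies Lemma~\ref{l:inverse-limit-ad-grp} to produce $x\in\Sigma$ with $\Psi_E(x)=z$. This step has no counterpart in your proposal. What you do have right is the other half of the paper's argument: for each finite $F\subset\N^r$, bundling the maps $\tilde\tau_\alpha$, $\alpha\in F$, into a single admissible group cellular automaton $\tau_F\colon A^G\to (A^F)^G$ \emph{over $G$} (via Lemma~\ref{l:ad-mor}) and applying Theorem~\ref{t:artinian-general-sft} over $G$ shows that $\Lambda_F=\tau_F(\Sigma)\vert_E$ is an admissible subgroup. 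To complete the proof you must add the closedness argument and drop the appeal to Theorem~\ref{t:artinian-general-sft}(ii) over $\N^r$; only then does Theorem~\ref{t:main-N-r-sft} apply.
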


\begin{proof}
Let us denote   
$\Lambda \coloneqq \Lambda(\Sigma, E ;  \tau_1, \cdots, \tau_r) = \Psi_E(\Sigma) \subset (\Sigma_E)^{\N^r}$ 
where 
$\Psi_E(x) (\alpha)  \coloneqq  \left( \tau_\alpha \right)\vert_E$ 
for $x \in \Sigma$ and $\alpha   \in \N^r$ (cf.~\eqref{e:psi-e}). 
\par 
We are going to show that the subshift 
$\Lambda$ is an admissible group subshift of $(\Sigma_E)^{\N^r}$. 
\par 
First, observe that $\Lambda$ is $\N^r$-invariant. Indeed, 
let $y \in \Lambda$ and let $\beta \in \N^r$. 
Then there exists $x \in \Sigma$ such that $y= \Psi_E(x)$. 
Since $\tau_1, \cdots, \tau_r$ are pairwise commuting, 
we have for every $\alpha \in \N^r$ that 
\begin{align} 
(\beta \star y)( \alpha ) & = y ( \alpha + \beta ) 
= \Psi_E( x ) ( \alpha + \beta ) \\
& = \tau_{\alpha + \beta}(x)\vert_E 
= \tau_\alpha( \tau_\beta  (x) )\vert_E \nonumber \\
& = \Psi_E(\tau_\beta(x))(\alpha) \nonumber
\end{align} 
\par 
It follows that $\beta \star y = \Psi_E(\tau_\beta(x)) \in \Lambda$ and thus 
$\Lambda$ is a subshift of $(\Sigma_E)^{\N^r}$. 
\par 
Now let $F \subset \N^r$ be a finite subset, we must show that $\Lambda_F$ is an admissible subgroup 
of $A^F$.  To see this, choose $M_F \subset G$ to be a large enough finite subset 
such that $M_F$ is memory set of every cellular automaton $\tau_\alpha$ where 
$\alpha \in F$. 
\par
Let $B \coloneqq A^F$ then $B$ inherits from $A$ an admissible Artinian group structure. 
For every $\alpha \in F$, 
let $\mu_\alpha \colon A^{M_F} \to A$ be the local defining map of $\tau_\alpha$ 
and denote by $\tilde{\tau}_\alpha \colon A^G \to A^G$ the corresponding induced cellular automaton that extends $\tau_\alpha$. 
Then $\mu_\alpha$ is an admissible homomorphism and $\tilde{\tau}_\alpha $ is an admissible group cellular automaton. 
\par  
Consider the cellular automaton $\tau_F \coloneqq \left( \tilde{\tau}_{\alpha}\right)_{\alpha \in F} \colon A^G \to B^G$ defined by: 
\[
\tau_F (x) (g) \coloneqq (\tilde{\tau}_\alpha(x)(g))_{\alpha \in F}, \quad \mbox{for all $x \in A^G$ and $g \in G$}. 
\] 
\par 
Then $\tau_F$ is  a cellular automaton 
admitting $M_F$ as a memory set whose associated local defining map 
is given by 
\begin{align} 
\label{e:mu-f-main} 
\mu_F \colon A^{M_F} \to B,\quad \mu(c) = (\mu_\alpha(c))_{\alpha \in F}, \quad c \in A^{M_F}.   
\end{align}

\par 
We infer from Lemma~\ref{l:ad-mor} that $\mu_F$ is an admissible homomorphism and thus  
$\tau_F$ is an admissible group cellular automaton.  
Thus, Theorem~\ref{t:artinian-general-sft} implies that $\tau_F(\Sigma)$ is 
an admissible group subshift of $B^G$ and thus 
$\tau_F(\Sigma) \vert_E$ is an admissible subgroup of $ B^E = (A^F)^E$. 
\par
It is clear by construction that $\Lambda_F = \tau_F(\Sigma) \vert_E$ via the canonical 
bijections 
\[ 
(A^E)^F = A^{E \times F}  = A^{F \times E} = (A^F)^E. 
\] 
\par 
It follows that $\Lambda_F$ is an admissible subgroup of $(A^E)^F$. 
\par 
Now let $z \in (\Sigma_E)^{\N^r}$ that belongs to the closure of $\Lambda$ 
in $(\Sigma_E)^{\N^r}$. 
 Let $(F_n)_{n \geq 1}$ be  an exhaustion of $\N^r$ consisting of finite subsets. 
 We can clearly suppose that the sequence $(M_{F_n})_{n \geq 1}$ also forms 
 an exhaustion of $G$.    
 \par    
 For every $n \geq 1$, we define (cf.~\eqref{e:mu-f-main}): 
 \begin{align}
 \label{e:x-n-z}
 X_{n} (z) \coloneqq \{ x \in \Sigma_{M_{F_n}} \colon  
  \mu_{F_n} (x) = z\vert_{F_n}   \} = \mu_{M_{F_n}}^{-1} (z\vert_{F_n}) \subset \Sigma_{M_{F_n}}. 
 \end{align}
 \par 
 Since $z$ belongs to the closure of $\Lambda$ in   $(\Sigma_E)^{\N^r}$, 
 it is immediate from the definition of $\Lambda$ and  \eqref{e:x-n-z} 
 that $X_{n}(z)$ is nonempty for all $n \geq 1$. 
 \par 
 Since $\mu_{M_{F_n}}$ is an admissible homomorphism, we deduce that 
 $X_n(z) $ is the left translate of an admissible subgroup of $\Sigma_{M_{F_n}}$ and $A^{M_{F_n}}$ 
 for every $n \geq 1$. 
\par 
Therefore, we obtain an inverse system $(X_n(z))_{n \geq 1}$ 
whose transition maps $X_m(z) \to X_n(z)$, where $m \geq n \geq 1$, 
are the restrictions of the canonical projections $\pi_{m,n} \colon A^{M_{F_m}} \to A^{M_{F_n}}$. 
Remark that the maps $\pi_{m,n}$ are admissible homomorphisms. 
\par  
Hence, Lemma~\ref{l:inverse-limit-ad-grp} implies that  
there exists 
\[
x \in \varprojlim_{n \geq 1} X_{n}(z) \subset \varprojlim_{n\geq 1} \Sigma_{M_{F_n}} = \Sigma, 
\] 
where the last equality follows 
from the closedness of $\Sigma$.  
By construction, we deduce immediately that $\Psi_{E}(x) = z$. 
In other words, $z \in \Lambda$ and thus $\Lambda$ is closed in $(\Sigma_E)^{\N^r}$ with respect to 
the prodiscrete topology. 
\par 
We conclude that $\Lambda$ is an admissible group  subshift of $(\Sigma_E)^{\N^r}$ and 
thus is a subshift of finite type by Theorem \ref{t:main-N-r-sft}. 
The proof is completed.  
\end{proof}

 \section{Main result}
\label{s:main-result}

We establish the following main result  whose proof is a natural generalization of the proof of \cite[Proposition~2]{kurka-97}:

\begin{theorem} 
\label{t:main-shadow}
Let $G$ be a countable monoid and let $A$ be an admissible Artinian group structure. 
Let  $\Sigma \subset A^G$ be an admissible Artinian group subshift. 
Suppose that $\Gamma \subset \End_{G\text{-grp}} (\Sigma)$  is a finitely generated abelian submonoid. 
Then the valuation action of $\Gamma$ on $\Sigma$ has the shadowing property.  
\end{theorem}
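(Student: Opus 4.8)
The plan is to deduce the shadowing property from Theorem~\ref{l:closed-kurka-sft}, which asserts that every column factorization of $\Sigma$ is a subshift of finite type, by running a Kurka-type patching argument \cite{kurka-97} for the $\Gamma$-action over a monoid universe. First I would fix an arbitrary finite generating set $S=\{\tau_1,\dots,\tau_r\}$ of the abelian monoid $\Gamma$: since $\Gamma$ is abelian and generated by $S$, the assignment $\alpha=(a_1,\dots,a_r)\mapsto\tau_\alpha=\tau_1^{a_1}\circ\cdots\circ\tau_r^{a_r}$ is a surjective monoid homomorphism $\N^r\to\Gamma$, so every $\sigma\in\Gamma$ equals $\tau_\alpha$ for some (possibly non-unique) $\alpha\in\N^r$. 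By Definition~\ref{d:shadow-general} and Lemma~\ref{l:intrinsic-def-shadow} it then suffices to prove the $(S,d|_\Sigma)$-shadowing property for the standard Hamming metric $d$ attached to one fixed admissible exhaustion $(E_n)_{n\geq0}$ of $G$ (which exists, by the remarks after Definition~\ref{d:admissible-sequence}); the advantage of an admissible exhaustion is that, by Lemma~\ref{l:lipschitz-tau}, each $\tau_i$ is $C$-Lipschitz on $(A^G,d)$ for a common constant $C\geq1$.

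Next, given $\varepsilon>0$, I would fix $m$ with $2^{-m}<\varepsilon$ and put $E:=E_m$. By Theorem~\ref{l:closed-kurka-sft}, the column factorization $\Lambda:=\Lambda(\Sigma,E;\tau_1,\dots,\tau_r)=\Psi_E(\Sigma)\subset(\Sigma_E)^{\N^r}$ is of finite type, say $\Lambda=\Sigma((\Sigma_E)^{\N^r};D,P)$ with $D\subset\N^r$ finite and $P\subset(\Sigma_E)^D$; let $L$ bound all coordinates of all points of $D$, so that $|\gamma|_1\leq rL$ for $\gamma\in D$. Given an $(S,d,\delta)$-pseudo-orbit $(x_\sigma)_{\sigma\in\Gamma}$ (Definition~\ref{d:pseudo-orbit}), with $\delta>0$ still to be fixed, I would form $z\in(\Sigma_E)^{\N^r}$ by $z(\alpha):=x_{\tau_\alpha}|_E$; this is well defined because $\tau_\alpha=\tau_\beta$ in $\Gamma$ forces $x_{\tau_\alpha}=x_{\tau_\beta}$. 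The key step is a telescoping estimate: representing $\tau_\gamma$ (for $\gamma\in D$) as a product of $|\gamma|_1\leq rL$ elements of $S$ and iterating $d(\tau_i(x_\sigma),x_{\tau_i\sigma})<\delta$ while absorbing distortion through $C$ at each application gives a constant $K:=\sum_{j=0}^{rL-1}C^j$, independent of $\beta$ and of $\gamma\in D$, with $d(\tau_\gamma(x_{\tau_\beta}),x_{\tau_\gamma\tau_\beta})<K\delta$ for all $\beta\in\N^r$, $\gamma\in D$. Choosing $\delta<2^{-m}K^{-1}$ and using $\tau_{\beta+\gamma}=\tau_\gamma\tau_\beta$ (commutativity), one gets $x_{\tau_{\beta+\gamma}}|_E=\tau_\gamma(x_{\tau_\beta})|_E$, i.e. $(\beta\star z)|_D=\Psi_E(x_{\tau_\beta})|_D$. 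Since $x_{\tau_\beta}\in\Sigma$, the point $\Psi_E(x_{\tau_\beta})$ lies in $\Lambda$, hence its origin $D$-window lies in $P$; thus every $D$-window of $z$ lies in $P$, so $z\in\Lambda=\Psi_E(\Sigma)$, and any $x\in\Sigma$ with $\Psi_E(x)=z$ satisfies $\tau_\alpha(x)|_E=x_{\tau_\alpha}|_E$, hence $d(\tau_\alpha(x),x_{\tau_\alpha})\leq2^{-m}<\varepsilon$, for every $\alpha\in\N^r$. As every $\sigma\in\Gamma$ is some $\tau_\alpha$, such an $x$ $\varepsilon$-shadows the pseudo-orbit.

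The step I expect to be the real obstacle is not this patching argument but Theorem~\ref{l:closed-kurka-sft} itself — that $\Lambda$ is genuinely a subshift of finite type — which is exactly where the machinery of Sections~\ref{s:markov-type} and~\ref{s:kurka-construction} is needed, via $\N^r\times F$ being of admissible Markov type (Theorem~\ref{t:main-N-r-sft}) together with the stability of admissible group subshifts under images of admissible group cellular automata. A secondary subtlety is that the column alphabet $\Sigma_E$ need not be finite, or even compact, so one cannot appeal to compactness of the phase space; it is precisely the finite-type structure of $\Lambda$, together with the Lipschitz bound of Lemma~\ref{l:lipschitz-tau} and the intrinsic (metric-independent) formulation of shadowing in Definition~\ref{d:shadow-general}, that replaces the usual compactness arguments.
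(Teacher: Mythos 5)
Your proposal is correct and follows essentially the same route as the paper's own proof: reduce to one standard Hamming metric attached to an admissible exhaustion, pass to the column factorization $\Lambda(\Sigma,E;\tau_1,\dots,\tau_r)$ over $\N^r$, invoke Theorem~\ref{l:closed-kurka-sft} to get a finite defining window, use the Lipschitz bound of Lemma~\ref{l:lipschitz-tau} in a telescoping estimate to show the candidate point $z(\alpha)=x_{\tau_\alpha}\vert_E$ lies in $\Lambda$, and lift it through $\Psi_E$ to a shadowing configuration. The only cosmetic difference is your Lipschitz bookkeeping (a geometric sum $\sum_j C^j$ from iterating the generators' constant, versus the paper's single common constant $C$ for all $\tau_\beta$ with $\beta$ in the window, giving $C(a_1+\cdots+a_r)\delta$), which does not affect the argument.
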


\begin{proof} 
Suppose that $S= \{ \tau_1, \cdots, \tau_r \}$ is a finitely generating set 
of the monoid $\Gamma$, i.e., $\Gamma = \langle \tau_1, \cdots, \tau_r \rangle$. 
For every $n \geq 1$, let $F_n \coloneqq  \{(a_1, \dots, a_r) \in \N^r \colon a_1, \cdots,a_r \leq n \}$. 
Then $(F_n)_{n \geq 1}$ is an increasing sequence of finite subsets of $\N^r$  
such that $\N^r  = \bigcup_{n \geq 1} F_n$. 
\par 
Since $G$ is a countable monoid, it admits an admissible exhaustion $(E_n)_{n \geq 0}$ of finite subsets of $G$ 
(cf.~Definition~\ref{d:admissible-sequence}). 
\par 
We then have a standard Hamming metric $d$ on $A^G$ associated with $(E_n)_{n \geq 0}$ 
by setting for every $x, y \in A^G$ (cf.~\eqref{d:hamming}): 
\begin{align}
\label{e:metric-main}
d(x,y) \coloneqq 2^{-n}, \mbox{ where }  n \coloneqq \sup \{ k \in \N \colon x\vert_{E_k} =y\vert_{E_k} \}. 
\end{align}

\par 
Let us fix $\varepsilon >0$ and choose an integer $n_0 >0$ such that 
$2^{-n_0} < \varepsilon$. 
Since  $\Sigma \subset A^G$ be an admissible Artinian group subshift and $E_{n_0}$ 
is finite, the restriction $\Sigma_{E_{n_0}}$ is a  admissible subgroup of $A^{E_{n_0}}$ 
and thus inherits a compatible admissible Artinian group structure.  
\par 
Therefore, with the notations as in Section~\ref{s:kurka-construction},  
 the column factorization 
$$
\Lambda \coloneqq \Lambda(\Sigma, E ;  \tau_1, \cdots, \tau_r) \subset (\Sigma_{E_{n_0}})^{\N^r}
$$ 
is an admissible Artinian group subshift of finite type 
by Theorem~\ref{l:closed-kurka-sft}. 
Hence, there exists an integer $N \geq 1$ such that 
\[
\Lambda = \Sigma( (\Sigma_{E_{n_0}})^{\N^r}; F_N, \Lambda_{F_N}).
\]  
\par 
Since $F_N$ is finite, Lemma~\ref{l:lipschitz-tau}  implies that there exists a finite constant $C \geq 1$ such that every cellular automaton 
$\tau_\alpha $ 
 where $\alpha  \in F_N$ 
 is $C$-Lipschitz, i.e., for every $x, y \in \Sigma$, we have 
\begin{equation} 
\label{e:c-lipschitz-commun}
d(\tau_\alpha(x), \tau_\alpha(y)) \leq C d(x,y).  
\end{equation}
\par 
Denote $\delta  \coloneqq  \frac{1}{2^{n_0}CNr}$.  
Let $(x_\tau)_{\tau \in \Gamma}$ be an $(S, d\vert_\Sigma,\delta)$-pseudo-orbit of the valuation action $T$ of $\Gamma$ on $\Sigma$. 
 Remark that $T$ is given by the evaluation map $T(\tau, x) \coloneqq \tau(x)$ for every $\tau \in \Gamma$ and $x \in \Sigma$. 
  \par    
Then by definition, we have  for all $\sigma \in S$ and $\tau \in \Gamma$ that 
\begin{equation} 
\label{e:delta-1}
d(T(\sigma ,x_\tau), x_{\sigma \tau}) < \delta.  
\end{equation}
\par 
Now let $\alpha= (a_1, \cdots, a_r)  \in F_N$ and $\tau \in \Gamma$. 
We have by the triangle inequality that 
\begin{align*} 
& d ( T ( \tau_{\alpha} ,x_\tau), x_{\tau_\alpha \tau} )  
 =  d ( \tau_1^{a_1}   \cdots    \tau_r^{a_r} (x_\tau), x_{\tau_\alpha \tau} )  \\
& \leq \sum_{k=0}^{a_1-1}  d (  \tau_1^k \tau_1(x_{\tau_1^{a_1-k-1}   \cdots    \tau_r^{a_r} \tau }), \tau_1^k  (x_{\tau_1^{a_1- k}   \cdots    \tau_r^{a_r} \tau }) ) \,\,  + 
\\
&  + \sum_{k=0}^{a_2-1}  d (  \tau_1^{a_1} \tau_2^k \tau_2(x_{\tau_1^{a_2-k-1}   \cdots    \tau_r^{a_r} \tau }), \tau_1^{a_1} \tau_2^k (x_{\tau_2^{a_2- k}   \cdots    \tau_r^{a_r} \tau }) )  \,\, + \\
&   \cdots   \\
&   + \sum_{k=0}^{a_r-1}  d (  \tau_1^{a_1} \cdots \tau_{r-1}^{a^{r-1}} \tau_r^k \tau_r(x_{\tau_r^{a_r-k-1} \tau }), \tau_1^{a_1} \cdots \tau_{r-1}^{a_{r-1}}\tau_r^k (x_{\tau_r^{a_r-k} \tau }) ). 
\end{align*}  
 \par
Therefore, it follows from the $C$-Lipschitz continuity of $\tau_{\beta}$ for every $\beta \in F_N$ (cf.~\eqref{e:c-lipschitz-commun})   
and from the choice of $\delta$ that:
\par 
\begin{align} 
\label{e:delta-epsilon-shadowing} 
d ( T ( \tau_{\alpha} ,x_\tau), x_{\tau_\alpha \tau} )  
& \leq C \sum_{k=0}^{a_1-1}  d (   \tau_1(x_{\tau_1^{a_1-k-1}   \cdots    \tau_r^{a_r} \tau }), x_{\tau_1^{a_1- k}   \cdots    \tau_r^{a_r} \tau })  \,\,  + 
  \\
&  + C \sum_{k=0}^{a_2-1}  d ( \tau_2(x_{\tau_1^{a_2-k-1}   \cdots    \tau_r^{a_r} \tau }), x_{\tau_2^{a_2- k}   \cdots    \tau_r^{a_r} \tau })   \,\, +   \nonumber \\
&   \cdots  && \nonumber \\
&   + C \sum_{k=0}^{a_r-1}  d (\tau_r(x_{\tau_r^{a_r-k-1} \tau }),  x_{\tau_r^{a_r-k} \tau }) \nonumber  \\ 
& \leq C (a_1 + \cdots + a_r) \delta  \nonumber\\
& \leq CNr \frac{1}{2^{n_0}CNr} = 2^{-n_0}.  \nonumber
\end{align}  
\par 
Consider  $z \in \Sigma_{E_{n_0}}^{\N^r}$ defined by 
$z(\alpha) \coloneqq x_{\tau_\alpha}\vert_{E_{n_0}}$ 
for every $\alpha \in \N^r$. 
We claim that $z \in \Lambda$. For this, let $\beta \in \N^r$, we must show that 
$z\vert_{\beta + F_N} \in \Lambda_{\beta + F_N}$. 
\par 
Indeed, by \eqref{e:delta-epsilon-shadowing} and by the choice of $n_0$, 
we have for every $\alpha \in F_N$ that 
\begin{align} 
  (\tau_{\alpha} (x_{\tau_\beta}))\vert_{E_{n_0}} =  x_{\tau_\alpha \tau_\beta} \vert_{E_{n_0}}=x_{\tau_{\alpha + \beta}}\vert_{E_{n_0}}. 
\end{align} 
\par 
Therefore, we infer from the definition of the subshift $\Lambda$ and from \eqref{e:psi-e} that 
$z\vert_{\beta + F_N} \in \Lambda_{\beta + F_N}$. 
This shows that $z \in \Lambda$. 
\par 
Hence, there exists a configuration $x \in \Sigma$ 
such that $ \Psi_{E_0} (x) = z$. 
We deduce from the definitions of $\Psi_{E_{n_0}}$ and $z$ 
that for all $\alpha \in \N^r$, we have 
\[
(\tau_\alpha(x))\vert_{E_{n_0}}= z(\alpha) = x_{\tau_\alpha}\vert_{E_{n_0}}. 
\]  
\par 
Let $\tau\in \Gamma$, then $\tau = \tau_\alpha$ 
for some $\alpha \in \N^r$.  
Then, we find that  
\[
d(T(\tau,x), x_\tau) = d(\tau_\alpha(x), x_{\tau_\alpha}) \leq 2^{-n_0} < \varepsilon, 
\]  
which implies that   
$x$  $\varepsilon$-shadows the $(S, d,\delta)$-pseudo-orbit $(x_\tau)_{\tau \in \Gamma}$ 
with respect to the standard Hamming metric $d$ (cf.~\eqref{e:metric-main}).  
Therefore, Lemma~\ref{l:intrinsic-def-shadow} 
and Definition~\ref{d:shadow-general} imply that 
the action of $\Gamma$ on $\Sigma$ has the shadowing property and the proof is completed.  
\end{proof}

We thus obtain the following direct consequence. 
 
\begin{corollary}
Let $G$ be a finitely generated abelian monoid   and let $A$ be an admissible Artinian group structure. 
Let $\Sigma$ be an admissible group subshift of $A^G$. 
Then the shift action $T$ of $G$ on $\Sigma$ has the shadowing property. 
\end{corollary}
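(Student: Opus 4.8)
The plan is to reduce the shift action of a finitely generated abelian monoid $G$ on an admissible group subshift $\Sigma \subset A^G$ to an instance of Theorem~\ref{t:main-shadow}. The key observation is that the shift action itself is, up to reindexing, the valuation action of a finitely generated abelian submonoid of $\End_{G\text{-grp}}(\Sigma)$. More precisely, if $S = \{s_1, \dots, s_r\}$ generates $G$ as a monoid, then for each $i$ the map $\sigma_i \colon \Sigma \to \Sigma$, $\sigma_i(x) = s_i \star x$, is a cellular automaton (with memory set $M = \{1_G, s_i\}$ or simply $\{s_i\}$, and local defining map the evaluation $x\vert_M \mapsto x(s_i)$, which is a projection hence an admissible homomorphism). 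Thus each $\sigma_i \in \End_{G\text{-grp}}(\Sigma)$.

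Next I would check that $\Gamma \coloneqq \langle \sigma_1, \dots, \sigma_r \rangle \subset \End_{G\text{-grp}}(\Sigma)$ is an abelian submonoid: since $G$ is abelian, $\sigma_i \circ \sigma_j(x) = s_j \star (s_i \star x)$ has $(\sigma_i \circ \sigma_j(x))(h) = x(h s_i s_j) = x(h s_j s_i) = (\sigma_j \circ \sigma_i(x))(h)$, so the generators commute and $\Gamma$ is a finitely generated abelian submonoid. Theorem~\ref{t:main-shadow} then applies directly and gives that the valuation action of $\Gamma$ on $\Sigma$ has the shadowing property in the sense of Definition~\ref{d:shadow-general}.

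Finally I would translate this back to a statement about the shift action $T$ of $G$. The valuation action of $\Gamma$ and the shift action of $G$ differ only by the surjective monoid homomorphism $\N^r \twoheadrightarrow G$ (or $G \twoheadrightarrow \Gamma$), and one checks that the notions of $(S, d)$-pseudo-orbit and of $\varepsilon$-shadowing match up under this correspondence: a $\delta$-pseudo-orbit for $(T, G, S)$ pulls back to a $\delta$-pseudo-orbit for the $\Gamma$-action (indexed by $\Gamma$ rather than $G$, but this only makes the condition weaker), and a point $\varepsilon$-shadowing the latter $\varepsilon$-shadows the former. By Lemma~\ref{l:intrinsic-def-shadow} the shadowing property is independent of the choice of generating set and standard Hamming metric, so this gives the shadowing property for $T$.

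The main obstacle I anticipate is a purely bookkeeping one: making the identification between the index sets $G$, $\Gamma$, and $\N^r$ precise enough that the pseudo-orbit and shadowing conditions transfer cleanly, particularly because $G$ need not be free abelian so the map $\N^r \to G$ may have a kernel, and one must verify that imposing the pseudo-orbit condition only over a generating set of $G$ (rather than all of $\N^r$) still suffices. This is routine but requires care; everything else is immediate from Theorem~\ref{t:main-shadow}.
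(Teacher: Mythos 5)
Your proof is correct and follows essentially the same route as the paper, whose own proof simply observes that each shift map $x \mapsto g\star x$ is an admissible group cellular automaton and invokes Theorem~\ref{t:main-shadow}. The indexing subtlety you flag (pseudo-orbits indexed by $G$ versus by its possibly proper quotient $\Gamma \subset \End_{G\text{-grp}}(\Sigma)$) is real but is glossed over by the paper as well; it is resolved most cleanly by noting that the proof of Theorem~\ref{t:main-shadow} runs verbatim for pseudo-orbits indexed by $G$ itself, since that argument never uses injectivity of the parametrization $\alpha \mapsto \tau_\alpha$.
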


\begin{proof}
The shift action of each $g \in G$ induces a map $\Sigma \to \Sigma$ defined by $x \mapsto g \star x$
which is clearly an admissible group cellular automaton. 
Since $G$ is a finitely generated abelian monoid, we can   conclude by Theorem~\ref{t:main-shadow} 
that the shift action of $G$ on $\Sigma$ has the shadowing property.  
\end{proof}

We now describe the following result which is sightly more general than Theorem~\ref{t:main-shadow} in  many cases of interest in practice. 
\par 
Let $G$ be a countable monoid and let $A$ be an Artinian group, resp. an Artinian module, resp. a compact Lie group, resp. an algebraic group over an alegebraically closed field.  
Let $\Sigma \subset A^G$ be a closed subshift such that $\Sigma_E$ is a subgroup, resp. a submodule, resp. a closed Lie subgroup, resp. an algebraic subgroup 
of $A^E$ 
for every finite $E \subset G$. \par 
Let $n \geq 1$ be an integer and 
consider a finite number of pairwise commuting $G$-equivariant maps $\tau_1, \cdots, \tau_n \colon \Sigma \to \Sigma$. 
Assume that for every $i =1, \cdots, n $, there exist a finite subset $M_i \subset G$ and a homomorphism $\mu_i \colon \Sigma_{M_i} \to A$ of groups, resp. of modules, resp. of Lie groups, resp. of algebraic groups 
such that 
\begin{equation} 
\label{e:main-shadow-subshift} 
\tau_i(x)(g) = \mu_i( (g\star x)\vert_{M_i})  \quad  \text{for all } x \in \Sigma \text{ and } g \in G. 
\end{equation}
Such maps $\mu_i$ might fail to extend to homomorphisms $A^{M_i} \to A$. 

\begin{theorem}
\label{t:main-shadow-subshif} 
With the above hypotheses and notations, let $\Gamma$ be the monoid generated by 
$\tau_1,\cdots, \tau_n$ with the binary operation given by composition of maps. Then  
the valuation action of $\Gamma$ on $\Sigma$ has the shadowing property. 
\end{theorem}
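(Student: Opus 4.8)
The plan is to reduce Theorem~\ref{t:main-shadow-subshif} to Theorem~\ref{t:main-shadow} by replacing the alphabet $A$ with a suitable admissible Artinian group structure that makes $\Sigma$ into an admissible group subshift and makes the $\tau_i$ into admissible group cellular automata, even though the local rules $\mu_i$ need not extend to $A^{M_i}$. The key observation is that we do not need to work with $A$ itself: it suffices to find \emph{some} admissible Artinian group structure $B$ and an identification of $\Sigma$ with a subshift of $B^G$ under which everything becomes admissible. First I would recall (Example~\ref{example:canonical-module-admissible}, Example~\ref{example:canonical-group-admissible}, Remark~\ref{r:ad-ca}) that in each of the four listed cases — Artinian module, Artinian group, compact Lie group, algebraic group over an algebraically closed field — the alphabet $A$ carries a canonical admissible Artinian structure, homomorphisms of the relevant type are admissible, and closed/algebraic/Lie/submodule subgroups of $A^E$ are admissible subgroups. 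Consequently $\Sigma$ is itself an admissible group subshift of $A^G$ (as in Example~\ref{example:canonical-admissible-for-intro}, extended to the other two cases), since by hypothesis $\Sigma$ is closed and $\Sigma_E$ is an admissible subgroup of $A^E$ for every finite $E \subset G$.

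The remaining issue is that each $\tau_i$ is only given by a partial local rule $\mu_i \colon \Sigma_{M_i} \to A$, which may not extend to $A^{M_i} \to A$, so $\tau_i$ need not lie in $\End_{G\text{-grp}}(\Sigma)$ in the literal sense of the definition. To handle this I would fix a finite memory set $M := M_1 \cup \cdots \cup M_n \cup \{1_G\}$ common to all the $\tau_i$ and set $B := \Sigma_M$, which is an admissible subgroup of $A^M$ and hence carries an induced admissible Artinian structure. There is a canonical shift-equivariant embedding $\iota \colon \Sigma \hookrightarrow B^G$ sending $x$ to the configuration $g \mapsto (g\star x)\vert_M$; its image $\Sigma' := \iota(\Sigma)$ is again a closed subshift, and for every finite $E \subset G$ one has $\Sigma'_E \cong \Sigma_{EM}$ as admissible subgroups, so $\Sigma'$ is an admissible group subshift of $B^G$. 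Under this recoding each $\tau_i$ corresponds to a map $\tau_i' \colon \Sigma' \to \Sigma'$ whose local rule on $\Sigma'_{M'}$ (for $M'$ a bounded enlargement of $M$) is now \emph{globally defined} — it is literally a coordinate projection composed with $\mu_i$, which extends to an admissible homomorphism $B^{M'} \to B$ — because the constraint "$x\vert_{M_i}$ lies in the domain $\Sigma_{M_i}$" has been absorbed into the new alphabet $B$ and the subshift $\Sigma'$. Thus $\tau_i' \in \End_{G\text{-grp}}(\Sigma')$ for each $i$, the $\tau_i'$ pairwise commute, and $\Gamma' := \langle \tau_1', \dots, \tau_n'\rangle \subset \End_{G\text{-grp}}(\Sigma')$ is a finitely generated abelian submonoid isomorphic to $\Gamma$ as a monoid acting on a subshift isomorphic to $\Sigma$.

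With this setup, Theorem~\ref{t:main-shadow} applied to $(B, G, \Sigma', \Gamma')$ gives that the valuation action of $\Gamma'$ on $\Sigma'$ has the shadowing property. Finally I would transport this back: the embedding $\iota$ identifies $\Sigma$ with $\Sigma'$ equivariantly for the $\Gamma$-actions, and a standard Hamming metric on $A^G$ pulls back, up to a cofinal reindexing of exhaustions, to a standard Hamming metric on $B^G$ (each window $E$ in $G$ for $B^G$ corresponds to the window $EM$ for $A^G$, and these are cofinal among finite subsets as $1_G \in M$). Since the shadowing property is intrinsic — independent of the choice of finitely generating set and of the standard Hamming metric by Lemma~\ref{l:intrinsic-def-shadow} and Definition~\ref{d:shadow-general} — the shadowing property for $\Gamma'$ on $\Sigma'$ yields the shadowing property for $\Gamma$ on $\Sigma$, completing the proof. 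The main obstacle I anticipate is the bookkeeping in the second step: verifying carefully that after recoding to the alphabet $B = \Sigma_M$ the partial rules $\mu_i$ genuinely extend to admissible homomorphisms on a full power of $B$, and that the recoded subshift $\Sigma'$ has admissible restrictions — essentially a matter of chasing the canonical bijections $B^E = A^{EM}$ and invoking properties (2), (3) of Definition~\ref{d:general-artinian-structure}, but it is where the hypothesis "$\mu_i$ might fail to extend to $A^{M_i} \to A$" is actually dealt with.
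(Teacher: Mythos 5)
Your reduction has a genuine gap at the point where you assert $\tau_i' \in \End_{G\text{-grp}}(\Sigma')$. Recoding over the alphabet $B \coloneqq \Sigma_M$ does make the local rule \emph{defined} on the full power $B^{M}$: writing $\iota(x)(g) \coloneqq (g\star x)\vert_M$, one computes $\tau_i'(y)(g)(h) = \mu_i\bigl(y(hg)\vert_{M_i}\bigr)$, so the natural local rule is $\nu_i \colon B^{M} \to A^{M}$, $\nu_i(c)(h) \coloneqq \mu_i\bigl(c(h)\vert_{M_i}\bigr)$, and each $c(h)\vert_{M_i}$ does lie in the domain $\Sigma_{M_i}$ of $\mu_i$. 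But the definition of an admissible group cellular automaton requires $\tau_i'$ to extend to a cellular automaton $B^G \to B^G$, i.e.\ the local rule must take values in $B=\Sigma_M$, and $\nu_i(c)$ need not lie in $\Sigma_M$ when $c \in B^{M}$ is an ``inconsistent'' tuple, i.e.\ not of the form $(g\star \iota(x))\vert_{M}$ for a single $x\in\Sigma$. Concretely, take $\Sigma$ to be the subshift of constant configurations, $M_i=\{1_G\}$ and $\mu_i$ a nonconstant endomorphism $\phi$ of $A$: then $B$ consists of the constant $M$-tuples, while $\nu_i(c) = \bigl(h\mapsto \phi(a_h)\bigr)$ for $c(h) = a_h^M$, which is constant only when all the $\phi(a_h)$ coincide. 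Nor can this be repaired by a cleverer extension: any cellular automaton $B^G\to B^G$ restricting to $\tau_i'$ would have an admissible local rule $B^{M'}\to B$ extending the homomorphism already forced on the subgroup $\Sigma'_{M'} \subset B^{M'}$, which is exactly the extension problem (``$\mu_i$ might fail to extend'') the theorem is designed to avoid. The recoding relocates the obstruction from the domain of the local rule to its codomain without removing it, so Theorem~\ref{t:main-shadow} cannot be invoked as a black box on the recoded system.

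The paper takes a different (and in this respect unavoidable) route: it does not reduce to Theorem~\ref{t:main-shadow}, but reproves its two ingredients --- Theorem~\ref{l:closed-kurka-sft} (the column factorization is a subshift of finite type) and Lemma~\ref{l:lipschitz-tau} (Lipschitz continuity) --- directly for maps given by partial local rules $\mu_i\colon \Sigma_{M_i}\to A$, working throughout with the restrictions $\Sigma_E$ (which are admissible subgroups, resp.\ objects of the relevant category) and homomorphisms between them: the Lipschitz estimate only uses the existence of a memory set, $\Lambda_F$ is the image of a restriction of $\Sigma$ under a homomorphism of admissible subgroups, and the sets $X_n(z)$ are cosets of kernels of such homomorphisms, so Lemma~\ref{l:inverse-limit-ad-grp} still applies. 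The proof of Theorem~\ref{t:main-shadow} then runs verbatim. Your first step (recognizing $\Sigma$ as an admissible group subshift) and your last step (transporting shadowing through $\iota$ via Lemma~\ref{l:intrinsic-def-shadow}) are sound; the middle step is where the argument breaks.
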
 

\begin{proof} 
The proofs of Theorem~\ref{l:closed-kurka-sft} and 
Lemma~\ref{l:lipschitz-tau} 
can be modified in a  straightforward manner so that similar conclusions of 
Theorem~\ref{l:closed-kurka-sft} and 
Lemma~\ref{l:lipschitz-tau} are still valid for $\Gamma$ and for all $\tau \in \Gamma$ respectively.  
Therefore, the same proof of Theorem~\ref{t:main-shadow} can be applied to prove Theorem~\ref{t:main-shadow-subshif}. The easy verification is omitted. 
\end{proof}

\begin{proof}[Proof of Corollary~\ref{c:intro}] 
As in the proof of  \cite[Theorem~4.6]{csc-monoid-surj}, for every $\tau \in \Gamma$, there exists by the uniform continuity of $\tau$ a finite subset $M \subset G$ and a group (resp. module) homomorphism $\mu \colon \Sigma_M \to A$ such that 
\begin{equation} 
 \tau(x)(g) = \mu( (g\star x)\vert_M)  \quad  \text{for all } x \in \Sigma \text{ and } g \in G. 
\end{equation} 
Thus, Corollary \ref{c:intro} is an immediate consequence of Theorem~\ref{t:main-shadow-subshif}. 
\end{proof} 

\section{A counter-example}
The following example, inspired from \cite[Lemma~1]{salo-2018-polycyclic}, shows that our main results are optimal in the sense that we cannot remove the  hypothesis that $\Gamma$ is finitely generated in Corollary~\ref{c:intro} or in 
Theorem~\ref{t:main-shadow}. 

\begin{example}
\label{ex:counter} 
Let $A$ be an arbitrary group consisting of at least two elements. Let $G= \bigoplus_{\N} \Z / 2\Z$ be the direct sum indexed by $\N$ of copies of the group $ \Z / 2\Z$. 
Consider the subshift of constant configurations 
\[ 
\Sigma \coloneqq \{ a^G \colon a \in A\} \subset A^G. 
\] 
\par 
Then $\Sigma$ is clearly a subgroup of $A^G$ which is closed with respect to the prodiscrete topology. 
\par 
For every integer $n \geq 1$, 
let $E_n \coloneqq \{g = (g_i)_{i \in \N}  \in G \colon g_i = 0 \mbox{ for all } i \geq n  \}$ 
and let $E_0 \coloneqq \varnothing$. Then $(E_n)_{n \in \N}$ is an exhaustion of the group $G$. 
\par 
Denote by $d$ the induced Hamming metric on $A^G$ and fix $\varepsilon = \frac{1}{2}$. 
Remark that if $x, y \in \Sigma$ are such that $d(x, y) \leq \varepsilon$, then $x =y$. 
\par
We claim that for every integer $m \geq 1$, the right shift action of $G$ on $\Sigma$ does not satisfy 
the $(E_m, d\vert_\Sigma)$-shadowing property. 
Notably, it will follow that the above shift action does not have the shadowing property as defined in  \cite[Definition~2.2]{meyerovitch-pseudo-orbit}). 
\par 
To prove the claim, let $b \in A \setminus \{ 0\}$ and let $n \geq m \geq 1$ be integers. Consider the  $(E_m, d\vert_\Sigma, 2^{-n})$-pseudo-orbit $(x_g)_{g \in G}$ of $G$ in $\Sigma$ given by 
\begin{equation}
\label{eq:counter}
x_g = 0^G \mbox{ if } g \in E_n \quad \mbox{ and } \quad  x_g = b^G \mbox{ if  } g \in G\setminus E_n.  
\end{equation}
\par
Suppose that there exists $x \in \Sigma$ which $\varepsilon$-shadows $(x_g)_{g \in G}$. Then for every $g \in G$, 
we have 
$\varepsilon \geq d(g\star x, x_g)$. Hence, $g \star x = x_g$ and as $x$ is constant, it follows that $x=g\star x = x_g$ for every $g \in G$. 
Since $E_n$ and $G \setminus E_n$  are nonempty, \eqref{eq:counter} then implies that $0^G=b^G$, which is a contradiction since $b \neq 0$. Hence, no point $x \in \Sigma$ can $\varepsilon$-shadow $(x_g)_{g \in G}$ and the claim is proved.

\end{example}

\bibliographystyle{siam}
\bibliography{lsalgsft}

\end{document}